\newtheorem{proposition}{Proposition}[section]
\newtheorem{corollary}[proposition]{Corollary}
\newtheorem{definition}[proposition]{Definition}
\newtheorem{theorem}[proposition]{Theorem}
\newtheorem{remark}[proposition]{Remark}
\newtheorem{example}[proposition]{Example}
\newtheorem{lemma}[proposition]{Lemma}
\newtheorem{problem}[proposition]{Problem}
\begin{document}

\title{On the homotopy type of complexes of graphs with bounded domination number}
\author{Jes\'us Gonz\'alez and Teresa~I.~Hoekstra-Mendoza}
\date{}

\maketitle

\begin{abstract}
Let $D_{n,\gamma}$ be the complex of graphs on $n$ vertices and domination number at least~$\gamma$. We prove that $D_{n,n-2}$ has the homotopy type of a finite wedge of 2-spheres. This is done by using discrete Morse theory techniques. Acyclicity of the needed matching is proved by introducing a relativized form of a well known method for constructing acyclic matchings on suitable chunks of simplices. Our approach allows us to extend our results to the realm of infinite graphs. In addition, we give evidence supporting the assertion that the homotopy equivalences $D_{n,n-1}\simeq \bigvee S^0$ and $D_{n,n-2}\simeq \bigvee S^2$ do not seem to generalize for $D_{n,\gamma}$ with $\gamma\leq n-3$.
\end{abstract}

{\small 2010 Mathematics Subject Classification: Primary: 55P15. Secondary: 05C69, 55U10, 57M15.}

{\small Keywords and phrases: Complex of a graph property, domination number, discrete Morse theory, acyclic matching, homotopy type.}

\section{Introduction}
For an integer $n\geq2$, let $\mathcal{G}_n$ stand for the family of simple, undirected graphs on vertices $\overline{n}=\{1,2,\ldots,n\}$, and let $\mathfrak{P}$ be a graph property in $\mathcal{G}_n$, i.e., $\mathfrak{P}$ is a subset of $\mathcal{G}_n$ that is closed under isomorphism classes. It is customary to use the expression ``$\sigma$ satisfies $\mathfrak{P}$'' as a replacement for ``$\sigma\in\mathfrak{P}$''. The property $\mathfrak{P}$ is said to be monotone provided it is satisfied by any graph in $\mathcal{G}_n$ that is obtained by removing an edge from a graph already satisfying~$\mathfrak{P}$. Since the vertex set is fixed, we may identify each graph $\sigma\in\mathcal{G}_n$ with its edge set. In these terms, a monotone property $\mathfrak{P}$ gives rise to an abstract simplicial complex $K(\mathfrak{P})$. Explicitly, $d$-dimensional simplices of $K(\mathfrak{P})$ are given by the graphs having $d+1$ edges that satisfy $\mathfrak{P}$.

Complexes of graphs, i.e., the geometric realizations of abstract simplicial complexes arising from monotone graph properties, is the subject of an active and fruitful line of research in combinatorial topology. For one, complexes of graphs have deep connections to developments in other fields. For instance, the complex defined by the family of $n$-vertex matchings plays a subtle role in Ksontini's investigation (\cite{MR1996066,MR2095571}) of the homotopy type of the Quillen complex associated to the lattice of subgroups (at a given prime) of the $n$-symmetric group. Likewise, complexes of (not $i$-) connected graphs appear in Vassiliev's study of the cohomology groups of spaces of knots (\cite{MR1738399}), while complexes of bounded-degree graphs can be used in the analysis of homological properties of free two-step nilpotent Lie algebras (\cite{MR923671, MR1409979}). In addition, complexes of graphs are intrinsically rich objects whose properties have been investigated by a number of authors. The reader is referred to Jonsson's book~\cite{MR2368284} for details and a complete list of references.

In this paper we focus on the complex $D_{n,\gamma}$ defined by $n$-vertex graphs whose domination number is at least $\gamma$ (see Section~\ref{antecedentes} for background details). Our main result shows that $D_{n,n-2}$ is homotopy equivalent to a wedge of 2-spheres:

\begin{theorem}\label{teoremaprincipal}
For $n\ge4$, let
\begin{equation}\label{nsw}
N_n=\frac1{12}(n-2)(n-3)(3n^2-7n-2).
\end{equation}
Then $D_{n,n-2}$ has the homotopy type of $\,\bigvee_{N_n} S^2$, a wedge of $N_n$ 2-dimensional spheres.
\end{theorem}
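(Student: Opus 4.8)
The plan is to build an acyclic matching on the face poset of $D_{n,n-2}$ whose critical simplices are a single vertex, no edge, exactly $N_n$ triangles and no tetrahedron. Granting this, the fundamental theorem of discrete Morse theory produces a homotopy equivalence between $D_{n,n-2}$ and a CW complex with one $0$-cell, no $1$-cell, $N_n$ $2$-cells and nothing in higher dimensions; the $1$-skeleton of that model being a point, every attaching map of a $2$-cell is null-homotopic, so the model is literally $\bigvee_{N_n}S^2$, which is Theorem~\ref{teoremaprincipal}.

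The first step is to describe the simplices. By Theorem~\ref{vizing}, $\dim D_{n,n-2}=3$ and its top cells are the four-cycles $C_4+(n-4)$, so every simplex is a graph on $\overline n$ with at most four edges; running through the isomorphism types of such graphs and computing domination numbers shows that the simplices of $D_{n,n-2}$ are exactly: all graphs with at most two edges — these have $\gamma\ge n-2$ —, the triangles $K_3+(n-3)$ and the four-vertex paths $P_4+(n-4)$ in dimension two, and the four-cycles $C_4+(n-4)$ in dimension three. In particular the $1$-skeleton is the complete graph on the $\binom n2$ ``edge-vertices'', so $D_{n,n-2}$ is connected, and it has $\binom n2$ vertices, $\binom{\binom n2}{2}$ edges, $\binom n3+12\binom n4$ triangles and $3\binom n4$ tetrahedra. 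A short computation gives $\chi\big(D_{n,n-2}\big)=1+N_n$, matching the target list of critical cells: once all critical $1$- and $3$-cells are eliminated and only one critical $0$-cell remains, the count of critical $2$-cells is forced to be $N_n$, so only the construction of a matching with these properties remains.

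I would assemble the matching in successive layers, each designed to restrict — relative to the earlier layers — to an element matching, hence automatically acyclic. A natural scheme: \textbf{(a)} match every tetrahedron $C_4+(n-4)$ downward with the path $P_4+(n-4)$ obtained by deleting its smallest edge (this is well defined since each $P_4+(n-4)$ of the complex extends to a \emph{unique} four-cycle, and it uses up $3\binom n4$ of the $12\binom n4$ paths); \textbf{(b)} match the remaining two-cells — all $\binom n3$ triangles and the surviving $9\binom n4$ paths — downward with two-edge subgraphs, via a fixed rule choosing a canonical $1$-face, arranged so that exactly $N_n$ of these two-cells stay critical; \textbf{(c)} among the one- and two-edge graphs not yet touched, match $\binom n2-1$ edge-vertices upward with one-edge graphs so that the latter span a tree in the $1$-skeleton, leaving one critical vertex and no critical edge. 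The numerology of the previous paragraph makes the counts compatible.

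The crux is acyclicity of the assembled matching: the selection rules in (a)--(c) are adaptive, since the ``distinguished edge'' of a simplex depends on which edges it already carries and on whether the modification stays in $D_{n,n-2}$, so the one-line argument valid for a plain element matching does not apply. This is where the relativized version of the standard method, announced in the abstract, is needed: stratify the simplices into chunks on which the chosen rule genuinely reduces to an element matching relative to the already-processed chunks, glue the per-chunk acyclic matchings by a Cluster/patchwork-type lemma, and exclude alternating cycles crossing chunk boundaries by exploiting that adding or deleting a single edge changes $\gamma$ by a controlled amount — which tightly restricts the possible upward-then-downward moves among graphs with at most four edges, essentially a finite but delicate analysis of how four-cycles, triangles and four-paths share their $2$- and $3$-edge faces inside $D_{n,n-2}$. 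That analysis is the main obstacle; with it in hand, Theorem~\ref{teoremaprincipal} follows from discrete Morse theory together with the Euler-characteristic count.
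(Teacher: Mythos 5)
Your high-level framework is the same as the paper's: build an acyclic matching with one critical $0$-cell, no critical $1$- or $3$-cells, and some critical $2$-cells; use the Euler characteristic (you computed $c_0,c_1,c_2,c_3$ correctly) to force the number of critical $2$-cells to be $N_n$; and conclude via discrete Morse theory plus the observation that a CW model with a point $1$-skeleton and only $2$-cells is a wedge of $2$-spheres. That part is right and agrees with what the paper does in Sections~\ref{section3}--\ref{ultimasecciondeconclusiones}.

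However, what you submit is a plan, not a proof, and the matching scheme in it is also genuinely different from the paper's in a way worth flagging. The paper's first and most consequential move is to apply the inclusion--exclusion matching $\mathcal{P}_{12}$ with respect to the fixed edge $12$: pair $\sigma$ with $\sigma+12$ across all dimensions at once. This is automatically acyclic (it is the restriction of an element matching on the full simplex), its domain $X_{12}$ is a subcomplex, and it leaves exactly one critical cell, the vertex $12$. In one stroke this kills the whole $0$-skeleton and a large slab of the complex, so that the delicate work only has to be done on the residual family $R_{12}$, which has no $0$-cells. Your scheme never takes that step. You match \emph{every} tetrahedron downward by deleting its smallest edge (including tetrahedra containing the edge $12$, which the paper handles via $\mathcal{P}_{12}$, pairing $\sigma$ with $\sigma-12$ instead); then you propose an unspecified ``fixed rule'' for matching the remaining $2$-cells to $1$-cells; then you propose a spanning-tree matching of $0$-cells to $1$-cells. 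This can produce the right critical-cell counts --- the numerology is consistent with the Euler computation --- but consistency of counts is not a construction. You have not specified the rule in (b), not verified that it is injective (two remaining $2$-cells must not designate the same canonical $1$-face; the paper spends an entire multi-case proposition on exactly this), and not checked that the $1$-cells used in your spanning tree in (c) are actually still available after (b). Most importantly, you explicitly concede that the acyclicity of the assembled matching is ``the main obstacle'' and do not carry it out; in the paper that is Section~\ref{aciclicidad} in its entirety (Propositions~\ref{partialnocycles} and~\ref{inicioinductivo} together with the relativized patchwork lemma, Proposition~\ref{relativized}, used in an induction on~$n$).

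So there is a concrete gap: the explicit matching rules, the injectivity check in the $2\to 1$ layer, and above all the acyclicity argument are missing. The paper's device of offloading as much as possible to the automatically acyclic $\mathcal{P}_{12}$, and then only running a lexicographic rule on $R_{12}$ with an inductive, relativized no-cycle analysis, is precisely the mechanism that turns your ``finite but delicate analysis'' into something tractable; without an analogue of it, steps (a)--(c) as stated do not yet constitute a proof.
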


The situation for $n=3$ could be thought of as a special instance in Theorem~\ref{teoremaprincipal}: as observed in the next section, $D_{3,1}$ is contractible (and $N_3=0$). On the other hand, our proof method (discrete Morse theory, reviewed in Section~\ref{antecedentes}) allows us to show that the homotopy equivalences $D_{n,n-2}\simeq\bigvee_{N_n} S^2$ fit together as $n$ varies:

\begin{theorem}\label{teoremaprincipalextendido}
The union $D^2:=\bigcup_{n\geq3}D_{n,n-2}$ has the homotopy type of a countable wedge of 2-spheres.
\end{theorem}

A combinatorial interpretation of $D^2$ as a complex of graphs on countably-many vertices, as well as the description of inclusions $D_{n,n-2}\hookrightarrow D_{n+1,n-1}$, are deferred to Section~\ref{graficasinfinitas} where Theorems~\ref{teoremaprincipal} and~\ref{teoremaprincipalextendido} are proved. 

\medskip
Our methods allow us to identify (in Section~\ref{52}) the homotopy type of $D_{5,2}$:
\begin{proposition}\label{d52}
The graph complex $D_{5,2}$ is homotopy equivalent to a wedge of four spheres of dimension 5.
\end{proposition}

Theorem~\ref{teoremaprincipal}, Corollary~\ref{teoremaprincipalextendido} and Proposition~\ref{d52} are similar to results in the literature where particular families of complexes of graphs are shown to be homotopy equivalent to wedges of copies of some fixed sphere (see for instance~\cite{MR1660341}). It would thus be reasonable to ask whether the three statements above are special cases of a result on the same vein that applies to any $D_{n,n-k}$. However, the statement of such a result ---if it exists--- cannot be a straightforward extension of Theorem~\ref{teoremaprincipal} and Corollary~\ref{d52}. For instance, $D_{6,3}$ does not have the homotopy type of a wedge of \emph{odd} dimensional spheres (not even if we allow odd spheres of different dimensions). Indeed, computer calculations report that the Euler characteristic of $D_{6,3}$ is $92$. So, if this space splits up to homotopy as a wedge of spheres, then many of the splitting wedge sphere summands would have to be even-dimensional (to allow for a positive Euler characteristic). This motivates the following questions:

\begin{problem}{\em
Is it true that all complexes $D_{n,k}$ have the homotopy type of a wedge of spheres? Is there a complex $D_{n,k}$ which has the homotopy type of a wedge of spheres not all of which have the same dimension?
}\end{problem}

\section{Graph theoretic preliminaries}\label{antecedentes}
We start by reviewing standard definitions in graph theory. Recall $n\geq2$. A set $D\subseteq\overline{n}$ dominates a graph $\sigma\in\mathcal{G}_n$ if every vertex in $\overline{n}-D$ is $\sigma$-adjacent to some vertex in $D$ (thus, such a $D$ must be non-empty). The domination number of $\sigma\in\mathcal{G}_n$, denoted by $\gamma(\sigma)$, is the minimal cardinality of vertex sets that dominate~$\sigma$. Thus $1\leq\gamma(\sigma)\leq n$.

\begin{remark}\label{partcs}{\em
The following observations follow directly from the definition: The graph $\varnothing\in\mathcal{G}_n$ with no edges has $\gamma(\varnothing)=n$, while any other graph $\sigma\in\mathcal{G}_n$ has $1\leq\gamma(\sigma)\leq n-1$. In fact, a graph with a single edge has domination number equal to $n-1$, while the condition $\gamma(\sigma)\leq n-2$ is forced as soon as $\sigma$ has at least two edges. Actually the equality $\gamma(\sigma)=n-2$ holds if $\sigma$ has exactly two edges. (The relationship between domination number and edge cardinality is much subtler than what is apparent from the last two observations. For instance, the graph $\sigma\in\mathcal{G}_n$ with the three edges $\{1,2\}, \{2,3\}$ and $\{3,4\}$ has $\gamma(\sigma)=n-2$.)
}\end{remark}

Fix $k\in\{1,2,\ldots,n\}$. Any vertex set dominating a graph $\tau$ will also dominate any graph $\sigma$ containing $\tau$ as a subgraph, so that $\gamma(\tau)\ge\gamma(\sigma)$. Consequently, the family $\mathfrak{P}_k$ of graphs $\sigma\in\mathcal{G}_n$ satisfying $\gamma(\sigma)\geq k$ is a monotone property. We are interested in the homotopy properties of the (geometric realization of the) associated graph complex $$D_{n,k}:=K(\mathfrak{P}_k).$$ Explicitly, $d$-dimensional simplices of $D_{n,k}$ are given by the graphs in $\mathcal{G}_n$ having $d+1$ edges and domination number at least $k$. By Remark~\ref{partcs}, $D_{n,n}$ is empty, while $D_{n,1}=\Delta^{\genfrac(){0pt}{5}{n}{2} -1}$, the full complex on $\genfrac(){0pt}{1}{n}{2}$ vertices (i.e., the complex where any subset of vertices forms a simplex). So, to avoid trivial cases, we restrict attention to $D_{n,k}$ for $1<k<n$, in which case (again by Remark~\ref{partcs}) the vertex set of $D_{n,k}$ is the family of $\genfrac(){0pt}{1}{n}{2}$ edges of the complete graph $K_n$. Then, a family~$E$ of $d+1$ edges forms a $d$-simplex of $D_{n,k}$ provided the graph $\sigma\in\mathcal{G}_n$ with edge set $E$
has $\gamma(\sigma)\ge k$. In particular, the facets of $D_{n,k}$ (i.e., the simplices of $D_{n,k}$ that are maximal with respect to inclusion) are the graphs $\sigma\in\mathcal{G}_n$ which are maximal with respect to the condition $\gamma(\sigma)\geq k$. For instance, $D_{4,2}$ has two types of facets: (\emph{i}) cycles on four vertices and (\emph{ii}) graphs of $\mathcal{G}_4$ with two components, one of which is an isolated vertex and the other is a complete graph on three vertices (see Example~\ref{d42} in the next section). The latter facets are 2-dimensional while the former ones have maximal possible dimension 3. More generally, facets of $D_{n,k}$ of maximal possible dimension were described by Vizing in terms of minimal edge covers:

\begin{definition}
An edge cover of a graph $G$ is a set $C$ of edges of $G$ such that each vertex of $G$ is incident with at least one edge of $C$. A minimal edge cover is an edge cover of smallest possible cardinality.
\end{definition}

\begin{theorem}[Vizing~\cite{MR0188106}]\label{vizing}
Let $1<k<n$. The dimension of the complex $D_{n,k}$ is one less than the integral part of $$\frac12(n-k+2)(n-k).$$ Facets of $D_{n,k}$ of dimension $\dim(D_{n,k})$ are the graphs of the form $K'_{n-k+2}+(k-2)$. Here and below the notation $G+(m)$ stands for the graph obtained by adding~$m$ isolated vertices to a graph $G$, and $K'_m$ stands for a graph obtained from the complete graph on $m$ vertices $K_m$ by removing the edges in some minimal edge cover of $K_m$.
\end{theorem}

By Remark~\ref{partcs}, $D_{n,n-1}$ is a wedge of 0-spheres ---the zero-th skeleton of $\Delta^{\genfrac(){0pt}{5}{n}{2} -1}$. Our main result, Theorem~\ref{teoremaprincipal}, gives a similar description of the homotopy type of the first interesting case in the filtration of subcomplexes
\begin{equation}\label{filtration}
\xymatrix{D_{n,n-1}\;\ar@{^(->}[r] & \,D_{n,n-2}\; \ar@{^(->}[r] & \,D_{n,n-3}\; \ar@{^(->}[r] &\cdots\; \ar@{^(->}[r] & \,D_{n,2}\; \ar@{^(->}[r] & \,D_{n,1}\, \ar@{=}[r] & \,\Delta^{\genfrac(){0pt}{5}{n}{2} -1}}.
\end{equation}
On the other hand, as far as we are aware of, the homotopy type of $D_{n,k}$ is unknown for $n-3\geq k\geq 2$. In the introduction, we have made a point in this regard in the case of $D_{n,n-3}$. Here we address the case of $D_{n,2}$, which turns out to be an instance of a family of much studied complexes associated to a different (but related) graph property. Namely, consider the complex $D^d_n$ associated to the monotone property on $\mathcal{G}_n$ consisting of the graphs $\sigma$ all whose vertices have degree at most~$d$ (i.e., no vertex of $\sigma$ is adjacent to more than $d$ edges). The reader is referred to~\cite{MR2022345} for a review of the known properties of these complexes. Here we start by noting that $D_{n,k}\subseteq D_n^{n-k}$. Indeed, a graph $\sigma\in\mathcal{G}_n$ not representing a simplex of $D^{n-k}_d$ must have a vertex $v$ of degree $\deg(v)>n-k$. Then $v$ and the $n-\deg(v)-1$ vertices not adjacent to $v$ form a set of cardinality $n-\deg(v)$ that dominate $\sigma$, so that
$ 
\gamma(\sigma)\leq n-\deg(v)<k,
$ 
and $\sigma$ does not represent a simplex of $D_{n,k}$ either. Therefore~(\ref{filtration}) extends to
\begin{equation}\label{filtracion}
\begin{gathered}\xymatrix{
D_{n,n-1}\;\ar@{^(->}[r] \ar@{^(->}[d] & \,D_{n,n-2}\; \ar@{^(->}[r] \ar@{^(->}[d] & \cdots\; \ar@{^(->}[r] & \,D_{n,2}\; \ar@{^(->}[r] \ar@{^(->}[d] & \,D_{n,1}\, \ar@{=}[r] \ar@{^(->}[d] & \,\Delta^{\genfrac(){0pt}{5}{n}{2} -1} \\
D_{n}^1\;\ar@{^(->}[r] & \,D_{n}^{2}\; \ar@{^(->}[r] & \cdots\; \ar@{^(->}[r] & \,D_{n}^{n-2}\; \ar@{^(->}[r] & \,D_{n}^{n-1}. & 
}\end{gathered}
\end{equation}
Now, by definition, $D_n^{n-1}=\Delta^{\genfrac(){0pt}{5}{n}{2} -1}=D_{n,1},$ the largest complex in~(\ref{filtration}). Our point is that the next-to-the-right-most vertical inclusion in~(\ref{filtracion}) is an equality too. Indeed, if $\tau\in\mathcal{G}_n$ is a simplex of $D_n^{n-2}$, then no vertex of $\tau$ can have degree $n-1$, so that $\gamma(\tau)\geq2$ and, thus, $\tau$ is a simplex of $D_{n,2}$.\footnote{The agreement between $D_{n,k}$ and $D^{n-k}_n$ for $k=1,2$ fades away as $k$ grows. For instance, $D_{n,n-1}$ is 0-dimensional, while the reduced homology groups $\widetilde{H}^*(D^1_n;\mathbb{Q})$ are non-zero in a range of dimensions that grows with $n$ (see~\cite{MR1174893}).} We remark that $D_n^{n-2}$ has been studied by Jonsson (see Section~18.3 in~\cite{MR2368284}, particularly Corollary~18.20) via its Alexander dual, which is the complex associated to graphs in $\mathcal{G}_n$ having an isolated vertex.

\section{Discrete Morse theory}\label{secndmt}
Theorem~\ref{teoremaprincipal} will be a direct consequence of the main theorem in discrete Morse theory (Theorem~\ref{DMTkey} below) once we construct an acyclic matching on $D_{n,n-2}$ with $N_n$ critical 2-simplices, 1 critical 0-simplex, and no other critical simplices. This section is intended to record the needed basic background material. Details can be found in standard references such as~\cite{MR1612391,MR3379451}.

Le $X$ be an abstract simplicial complex with face poset $\mathcal{F}$, i.e., $\mathcal{F}$ is the set of simplicies of $X$ partially ordered by inclusion. Here and below, for a simplex $\alpha\in\mathcal{F}$, we will write $\alpha^{(p)}$ to indicate that $\alpha$ is $p$-dimensional. The Hasse diagram of $\mathcal{F}$, denoted by $H_\mathcal{F}$, is the directed graph with vertex set $\mathcal{F}$ and edges given by the family of ordered pairs $(\alpha^{(p+1)},\beta^{(p)})$ with $\beta\subset\alpha$. Such an edge will also be denoted as $\alpha^{(p+1)}\searrow\beta^{(p)}$. For a matching $\mathcal{P}$ on $H_\mathcal{F}$ (i.e.~a directed subgraph of $H_\mathcal{F}$ whose vertices have degree at most 1), the modified Hasse diagram $H_\mathcal{F}(\mathcal{P})$ is the directed graph obtained from $H_\mathcal{F}$ by reversing all the the matching edges, i.e., the edges of $\mathcal{P}$. A matching edge will be denoted as $\beta^{(p)}\nearrow\alpha^{(p+1)}$ and, in such a case, $\alpha$ is said to be $\mathcal{P}$-collapsible, while $\beta$ is said to be $\mathcal{P}$-redundant. In these terms, a directed path in $H_\mathcal{F}(\mathcal{P})$ is spelled out by an alternate sequence of up-going and down-going arrows:
\begin{equation}\label{path}
\tau_0\nearrow\sigma_1\searrow\tau_1\nearrow\cdots\nearrow\sigma_k\searrow\tau_k.
\end{equation}
The path in~(\ref{path}) is said to
\begin{itemize}
\item[(\emph{a})] have length $k$;
\item[(\emph{b})] be simple if $\tau_i\neq\tau_j$ for $i\neq j$;
\item[(\emph{c})] be a cycle if $k>0$, $\tau_0=\tau_k$ and the set $\{\tau_\ell\colon1\le\ell\le k\}$ has cardinality $k$.
\end{itemize}
Items ({\emph{a}}) and ({\emph{b}}) make sense (with $k=\infty$) for infinite paths $\tau_0\nearrow\sigma_1\searrow\tau_1\nearrow\cdots$. The matching $\mathcal{P}$ is said to be acyclic if $H_\mathcal{F}(\mathcal{P})$ has no directed cycles. Simplices of $X$ that are neither $\mathcal{P}$-redundant nor $\mathcal{P}$-collapsible are said to be $\mathcal{P}$-critical.

\begin{theorem}[{See~\cite[Corollary 3.5]{MR1612391}}]\label{DMTkey}
The geometric realization of a finite simplicial complex $X$ with an acyclic matching $\mathcal{P}$ has the homotopy type of a CW complex having one cell of dimension $d$ for each $\mathcal{P}$-critical $d$-simplex of $X$.
\end{theorem}

Acyclic matchings on a simplicial complex $X$ can be constructed by dividing the family of simplices of $X$ into smaller families on each of which (typically more manageable) matchings are to be defined. The formulation we need, Proposition~\ref{descomposicion} below, is a special instance of~\cite[Lemma~4.1]{MR2164921}.

\begin{proposition}\label{descomposicion}
Let the simplices of a finite simplicial complex $X$ be partitioned into pairwise disjoint families of simplices $X_\ell$, $\ell=1,\ldots,k$. Assume:
\begin{itemize}
\item[(i)] $X_1\cup\cdots\cup X_\ell$ is a subcomplex of $X$, for $\ell=1,2,\ldots, k-1$.
\item[(ii)] There is an acyclic matching $\mathcal{P}_\ell$ on $X_\ell$, for $\ell=1,2,\ldots, k$. 
\end{itemize}
Then $\mathcal{P}:=\bigcup_{1\le\ell\le k} \mathcal{P}_\ell$ is an acyclic matching on $X$.
\end{proposition}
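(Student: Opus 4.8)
\medskip
\noindent\emph{Proof strategy.} The plan is to verify directly the two defining conditions of an acyclic matching for $\mathcal{P}=\bigcup_{\ell}\mathcal{P}_\ell$: that it is a matching on the face poset of $X$, and that the directed graph $D_{\mathcal{P}}$ obtained from the Hasse diagram of $X$ by reversing the edges that correspond to pairs of $\mathcal{P}$ contains no directed cycle.

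I would dispose of the matching condition first, since it is immediate: because the families $X_i$ are pairwise disjoint, every simplex of $X$ lies in a unique $X_i$ and can therefore be used only by $\mathcal{P}_i$; as $\mathcal{P}_i$ is a matching, no simplex is used twice, and each pair of $\mathcal{P}$ consists of a simplex together with one of its codimension-one faces. Hence $\mathcal{P}$ is a matching.

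For acyclicity I would introduce the level function $\lambda\colon X\to\{1,\dots,k\}$ defined by $\lambda(\sigma)=i$ whenever $\sigma\in X_i$, and show that $\lambda$ cannot increase along a directed edge of $D_{\mathcal{P}}$. An edge coming from a pair $(\rho,\sigma)\in\mathcal{P}$ (oriented $\rho\to\sigma$, i.e.\ ``upward'') has both endpoints in the same $X_i$, because that pair belongs to some $\mathcal{P}_i$, so $\lambda$ is actually constant along it. An edge $\sigma\to\rho$ with $\rho$ a codimension-one face of $\sigma$ and $(\rho,\sigma)\notin\mathcal{P}$ (oriented ``downward'') forces $\lambda(\rho)\le\lambda(\sigma)$: indeed $X_1\cup\cdots\cup X_{\lambda(\sigma)}$ is a subcomplex of $X$ by hypothesis---this is given for every proper initial union and is trivially true for the total union $X$---so it contains the face $\rho$ of $\sigma$. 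Consequently, traversing a directed cycle, $\lambda$ never increases yet must return to its initial value, hence is constant along the cycle, say with value $i$; the whole cycle is then supported on simplices of $X_i$. Since the covering relation on each edge, and whether that edge is matched or unmatched, depend only on the two simplices involved---and the pairs of $\mathcal{P}$ lying within $X_i$ are precisely those of $\mathcal{P}_i$---the cycle is a directed cycle in the directed graph $D_{\mathcal{P}_i}$ associated with the family $X_i$ and its matching $\mathcal{P}_i$, contradicting the acyclicity of $\mathcal{P}_i$. Thus $D_{\mathcal{P}}$ has no directed cycle and $\mathcal{P}$ is acyclic.

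This is the so-called ``Patchwork'' (or ``Cluster'') lemma of discrete Morse theory, and I do not foresee a genuine obstacle. The steps are essentially bookkeeping; the only two places calling for a little attention are the use of the subcomplex hypothesis to obtain monotonicity of $\lambda$ along downward edges (this is exactly what that hypothesis is for, and one should note it is needed at every level, including the top one where it is automatic), and the identification of a constant-level directed cycle in $D_{\mathcal{P}}$ with a directed cycle in $D_{\mathcal{P}_i}$.
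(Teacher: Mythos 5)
Your proof is correct. The paper does not reprove Proposition~\ref{descomposicion} itself but refers the reader to the cited literature (the ``Cluster'' or ``Patchwork'' lemma, as in Jonsson and Hersh), and your level-function argument---constancy of $\lambda$ along matched edges, monotone nonincrease along unmatched edges via the nested-subcomplex hypothesis, hence any directed cycle is confined to a single $X_i$ and violates acyclicity of $\mathcal{P}_i$---is precisely the standard proof those references contain.
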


We will also need the following relativized form of Proposition~\ref{descomposicion}:
\begin{proposition}\label{relativized}
Let $X$, $X_\ell$ and $\mathcal{P}_\ell$ be as in Proposition~\ref{descomposicion}. Assume $X$ is a subcomplex of a larger complex $Y$ whose simplices are also partitioned into pairwise disjoint families of simplices $Y_\ell$, $\ell=1,\ldots,k$, satisfying the following three conditions:
\begin{itemize}
\item[(i)] $Y_\ell\cap X=X_\ell$, for $\ell=1,2,\ldots,k$.
\item[(ii)] $Y_1\cup\cdots\cup Y_\ell$ is a subcomplex of $Y$, for $\ell=1,2,\ldots,k-1$.
\item[(iii)] For $\ell=1,2,\ldots,k$, there is a matching $\mathcal{Q}_\ell$ on $Y_\ell$ (no assumption is made about acyclicity of $\mathcal{Q}_\ell$) which restricts to $\mathcal{P}_\ell$ on $X_\ell$, in the sense that (\emph{a}) two simplices of $X_\ell$ form a $\mathcal{P}_\ell$-matched pair if and only if they form a $\mathcal{Q}_\ell$-matched pair, and that (\emph{b})~no simplex of $X_\ell$ can be $\mathcal{Q}_\ell$-matched to a simplex of $Y_\ell-X_\ell$.
\end{itemize}
Assume in addition that, for each $\ell=1,2,\cdots,k$, there are no directed $\mathcal{Q}_\ell$-cycles in $Y_\ell-X_\ell$. Then each $\mathcal{Q}_\ell$ is in fact acyclic, so that (by Proposition~\ref{descomposicion}) $\mathcal{Q}:=\bigcup_{1\le\ell\le k} \mathcal{Q}_\ell$ is an acyclic matching on $Y$.
\end{proposition}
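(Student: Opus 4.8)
The plan is to reduce Proposition~\ref{relativized} to Proposition~\ref{descomposicion} applied to the ambient complex $Y$. To do that, the only thing that needs checking is that each $\mathcal{Q}_\ell$ is an acyclic matching on $Y_\ell$; once this is known, the hypotheses ``$Y_1\cup\cdots\cup Y_\ell$ is a subcomplex of $Y$'' and ``$\mathcal{Q}_\ell$ is an acyclic matching on $Y_\ell$'' are precisely what Proposition~\ref{descomposicion} requires, and we conclude that $\mathcal{Q}=\bigcup_\ell\mathcal{Q}_\ell$ is acyclic on $Y$. So the whole content is: \emph{no cycles in $Y_\ell-X$} plus \emph{$\mathcal{P}_\ell$ acyclic on $X_\ell$} implies \emph{$\mathcal{Q}_\ell$ acyclic on $Y_\ell$}.

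I would argue by contradiction at the level of the modified Hasse diagram of $Y_\ell$ associated with $\mathcal{Q}_\ell$: recall a $\mathcal{Q}_\ell$-cycle is a directed closed path
\[
\sigma_0 \to \tau_0 \to \sigma_1 \to \tau_1 \to \cdots \to \sigma_m \to \tau_m \to \sigma_0,
\]
where each $\sigma_i$ is a codimension-one face of $\tau_i$ and $\tau_i \to \sigma_{i+1}$ means $\sigma_{i+1}$ is a codimension-one face of $\tau_i$ with $\{\sigma_{i+1},\tau_i\}\notin\mathcal{Q}_\ell$ (indices mod $m+1$), with all $\sigma_i$ distinct. Suppose such a cycle exists. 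If every $\tau_i$ lies in $Y_\ell-X$, then since the lower faces $\sigma_i$ of cells in $Y_\ell-X$ can only fail to be in $Y_\ell-X$ when\ldots\ actually the cleanest route is: among the cells appearing in the cycle, look at whether any of them lies in $X$.

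Here is the dichotomy I would exploit. If the cycle is entirely contained in $Y_\ell-X$, it contradicts the hypothesis that there are no $\mathcal{Q}_\ell$-cycles in $Y_\ell-X$, and we are done. Otherwise some cell of the cycle lies in $X$, hence (as $Y_\ell\cap X=X_\ell$) in $X_\ell$. Now I use the fact that $X_\ell = Y_\ell\cap X$ and $X$ is a subcomplex of $Y$: if $\tau_i\in X_\ell$ then both of its relevant faces are in $X$, so both $\sigma_i,\sigma_{i+1}\in X_\ell$; and if $\sigma_i\in X_\ell$ while $\tau_{i-1},\tau_i\notin X$, this cannot happen because $\{\sigma_i,\tau_i\}\in\mathcal{Q}_\ell$ restricts to $\mathcal{P}_\ell$, meaning $\sigma_i$'s matched partner $\tau_i$ must also lie in $X_\ell$ (this is exactly the clause ``no $X_\ell$-cell is $\mathcal{Q}_\ell$-paired to a $Y_\ell$-cell [outside $X_\ell$]''). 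Chasing these implications around the closed path forces: once the cycle touches $X_\ell$, it stays in $X_\ell$ forever, i.e.\ the \emph{entire} cycle lies in $X_\ell$. But then it is a $\mathcal{P}_\ell$-cycle, since on $X_\ell$ the matching $\mathcal{Q}_\ell$ agrees with $\mathcal{P}_\ell$ and the incidence/partner relations are the same; this contradicts acyclicity of $\mathcal{P}_\ell$.

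So the only real work is the bookkeeping in the previous paragraph — verifying that the arrows of a $\mathcal{Q}_\ell$-cycle cannot cross from $X_\ell$ to $Y_\ell-X$ nor back, using the three bulleted hypotheses ($Y_\ell\cap X=X_\ell$, $X$ and $X_\ell$ being subcomplex-compatible, and $\mathcal{Q}_\ell$ restricting to $\mathcal{P}_\ell$ without pairing $X_\ell$-cells outside $X_\ell$). I expect the main subtlety to be the direction of the ``up'' arrows $\tau_i\to\sigma_{i+1}$: moving from a cell in $Y_\ell-X$ \emph{down} to a face always could land in $X_\ell$, so one must be careful that the \emph{matched} down-step $\sigma_i\to\tau_i$ is what pins the cell to $X_\ell$, and then argue the subsequent unmatched down-step $\tau_i\to\sigma_{i+1}$ keeps us in $X$ because $X$ is a subcomplex. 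Handling this asymmetry correctly — and observing that a cycle, being closed, cannot contain a one-way transition between the two regions — is the crux; everything else is a direct appeal to Proposition~\ref{descomposicion}.
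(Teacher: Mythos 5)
Your proof is correct and matches the approach the paper intends: the authors leave the details to the reader, stating only that the argument is ``virtually identical'' to the standard cycle-confinement proof of Proposition~\ref{descomposicion}, and your dichotomy (a $\mathcal{Q}_\ell$-cycle either stays entirely in $Y_\ell-X$, contradicting the hypothesis, or touches $X_\ell$ and then --- since matched partners of $X_\ell$-cells remain in $X_\ell$ and faces of $X$-cells remain in $X$, hence in $X_\ell=Y_\ell\cap X$ --- is trapped in $X_\ell$, giving a $\mathcal{P}_\ell$-cycle) is precisely that adaptation. The subsequent appeal to Proposition~\ref{descomposicion} to pass from acyclicity of each $\mathcal{Q}_\ell$ to acyclicity of $\mathcal{Q}$ on $Y$ is also exactly what the statement of Proposition~\ref{relativized} already packages.
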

\begin{proof}
Let $\mathcal{F}_X$ and $\mathcal{F}_Y$ stand for the face posets of $X$ and $Y$, respectively. Consider a directed edge $\beta\nearrow\alpha$ of $H_{\mathcal{F}_Y}(\mathcal{Q})$. Condition {\em (iii)} means that, if either $\alpha$ or $\beta$ belongs to $\mathcal{F}_X$, then in fact both $\alpha$ and $\beta$ belong to $\mathcal{F}_X$, so that in fact $\beta\nearrow\alpha$ is a directed edge of $H_{\mathcal{F}_X}(\mathcal{P})$. Since $X$ is a subcomplex of $Y$, condition {\em (i)} now yields that any directed $\mathcal{Q}_\ell$-cycle would have to either be a directed $\mathcal{P}_\ell$-cycle or, else, remain entirely outside $X_\ell$. Since both alternatives are ruled out by hypothesis, the proof is complete.
\end{proof}

The proof of Theorem~\ref{teoremaprincipalextendido} (given in Section~\ref{graficasinfinitas}) uses the following extension of Theorem~\ref{DMTkey} to infinite complexes:

\begin{theorem}[{\cite[Theorem~20]{MR3049255}}]\label{DMTkeyinfinito}
Let $\mathcal{P}$ be an acyclic matching on a (possibly infinite) simplicial complex $X$ with face poset $\mathcal{F}$. If the modified Hasse diagram $H_\mathcal{F}(\mathcal{P})$ contains no infinite directed simple paths, then the geometric realization of $X$ has the homotopy type of a CW complex having one cell of dimension $d$ for each $\mathcal{P}$-critical $d$-simplex of $X$.
\end{theorem}

\section{Structure of \texorpdfstring{$D_{n,n-2}$}{Dnn2}}\label{structuresection}
The hypothesis $n\geq4$ will be in force from this point on. By Remark~\ref{partcs}, $D_{n,n-2}$ has all possible simplices in dimensions $0$ and $1$. On the other hand, by Theorem~\ref{vizing}, $D_{n,n-2}$ has dimension 3, with 3-dimensional facets given by all 4-cycles with $n-4$ additional isolated vertices. In particular, the 2-dimensional simplices of $D_{n,n-2}$ that are not facets are the graphs of the form
\begin{equation}\label{endim2}
\begin{tikzpicture}[x=.6cm,y=.6cm]
\draw(1,1.5)--(2,1.5);\draw(2,1.5)--(3,1.5);\draw(3,1.5)--(4,1.5);
\node at (1,1.5) {\scriptsize$\bullet$};\node at (2,1.5) {\scriptsize$\bullet$};
\node at (3,1.5) {\scriptsize$\bullet$};\node at (4,1.5) {\scriptsize$\bullet$};
\node at (1,1.5) {\scriptsize$\bullet$};\node at (2,1.5) {\scriptsize$\bullet$};
\node at (5.5,1.5) {\footnotesize{${}+(n-4)$,}};
\end{tikzpicture}
\end{equation}
The remaining simplices of $D_{n,n-2}$ are described next.

\begin{lemma}\label{combinatorics2} 
The 2-dimensional facets of $D_{n,n-2}$ are the graphs of the form $K_3+(n-3)$, where $K_3$ stands for a complete graph on three vertices.
\end{lemma}
\begin{proof}
As noted in Remark~\ref{partcs}, a graph of the form $K_3+(n-3)$ has domination number $n-2$; furthermore, it is maximal with respect to the latter condition. The only other (not yet considered) possibilities for a 2-dimensional graph are
\begin{equation*}
\begin{tikzpicture}[x=.6cm,y=.6cm]
\draw(1,1)--(2,1);\draw(1,2)--(2,2);\draw(1,1.5)--(2,1.5);
\node at (1,2) {\scriptsize$\bullet$};\node at (1,1) {\scriptsize$\bullet$};
\node at (2,2) {\scriptsize$\bullet$};\node at (2,1) {\scriptsize$\bullet$};
\node at (1,1.5) {\scriptsize$\bullet$};\node at (2,1.5) {\scriptsize$\bullet$};
\node at (3.5,1.5) {\footnotesize{${}+(n-6)$,}};

\draw(9,1)--(10,1);\draw(9,2)--(10,2);\draw(9,2)--(8,2);
\node at (9,2) {\scriptsize$\bullet$};\node at (9,1) {\scriptsize$\bullet$};\node at (8,2) {\scriptsize$\bullet$};
\node at (10,2) {\scriptsize$\bullet$};\node at (10,1) {\scriptsize$\bullet$};
\node at (11.5,1.5) {\footnotesize{${}+(n-5)$,}};

\draw(17,2)--(18,2);\draw(17,2)--(16,2);\draw(17,2)--(17,1);
\node at (17,2) {\scriptsize$\bullet$};\node at (16,2) {\scriptsize$\bullet$};
\node at (18,2) {\scriptsize$\bullet$};\node at (17,1) {\scriptsize$\bullet$};
\node at (19.5,1.5) {\footnotesize{${}+(n-4)$,}};
\end{tikzpicture}
\end{equation*}
but they have domination number $n-3$.
\end{proof}

In what follows, the edge between vertices $i$ and $j$ ($i,j\in\overline{n}$) is denoted by~$ij$ (no distinction is made between $ij$ and $ji$). In addition, for edges $a, b, c,\ldots$, we write $a|b|c|\cdots$ as a shorthand for the graph of $\mathcal{G}_n$ whose edge set is $\{a,b,c,\cdots\}$.

\begin{example}\label{d42}{\em
Facets of $D_{4,2}$ are the 3-simplexes $12|13|24|34$, $12|14|23|34$ and $13|14|23|24$, and the 2-simplexes $12|13|23$, $12|14|24$, $13|14|34$ and $23|24|34$. 
}\end{example}

\section{The matching}\label{section3}
Recall $D_{n,1}=\Delta^{\genfrac(){0pt}{5}{n}{2} -1}$. Let:
\begin{itemize}
\item $\mathcal{P}\hspace{.3mm}'_{\hspace{-.3mm}12}$ be the matching on $D_{n,1}$ given by inclusion-exclusion of the edge $12$;
\item $\mathcal{P}_{12}$ be the restriction of $\mathcal{P}\hspace{.3mm}'_{\hspace{-.3mm}12}$ to $D_{n,n-2}$;
\item $X_{12}$ be the subset of $D_{n,n-2}$ consisting of
\begin{itemize}
\item[(i)] the graph with the single edge $12$, and 
\item[(ii)] the graphs with a $\mathcal{P}_{12}$-matching pair.
\end{itemize}
\end{itemize}

In what follows, for $\sigma\in\mathcal{G}_n$, we write $\sigma+ij$ as a substitute for $\sigma\cup\{ij\}$. The following follows directly from the construction:
\begin{proposition}\label{x12iscomplex}
$X_{12}=\{\sigma\in D_{n,n-2}\colon\sigma+12\in D_{n,n-2}\}$, which is a subcomplex of $D_{n,n-2}$.
\end{proposition}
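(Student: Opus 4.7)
The plan is to split the verification into two independent parts: first the set-theoretic equality characterizing $X_{12}$, and then the closure of this set under face containment. Both parts reduce to the monotonicity of the domination number stated in the introduction---namely, $\gamma(\tau)\geq\gamma(\sigma)$ whenever $\tau\subseteq\sigma$---together with the trivial observation that $\sigma+12=\sigma$ when $12\in\sigma$.

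For the equality, I would argue by cases on whether or not $12\in\sigma$. If $12\notin\sigma$, then membership of $\sigma$ in $X_{12}$ via clause (ii) is equivalent, by the definition of $\mathcal{P}_{12}$ as the restriction of $\mathcal{P}\hspace{.3mm}'_{\hspace{-.3mm}12}$ to $D_{n,n-2}$, to having $(\sigma,\sigma+12)$ be a $\mathcal{P}_{12}$-matched pair; this in turn is exactly the condition $\sigma+12\in D_{n,n-2}$ on the right-hand side. If instead $12\in\sigma$ and $\sigma\neq\{12\}$, then $\sigma+12=\sigma$ makes the right-hand side condition automatic, and monotonicity of $\gamma$ gives $\sigma\setminus\{12\}\in D_{n,n-2}$, so the pair $(\sigma\setminus\{12\},\sigma)$ lies in $\mathcal{P}_{12}$ and $\sigma\in X_{12}$ via clause (ii). The remaining case $\sigma=\{12\}$ is covered directly by clause (i).

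For the subcomplex property, suppose $\sigma\in X_{12}$ and $\tau\subseteq\sigma$. Then $\tau\in D_{n,n-2}$ since $D_{n,n-2}$ is itself a simplicial complex. Moreover, $\tau+12\subseteq\sigma+12$, and the latter lies in $D_{n,n-2}$ by the characterization just established; hence $\tau+12\in D_{n,n-2}$ as well, giving $\tau\in X_{12}$.

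There is no genuine obstacle to overcome here: both parts follow immediately from unpacking the definitions and invoking the monotonicity of $\gamma$ under subgraph inclusion, which is why the paper flags the statement as obvious.
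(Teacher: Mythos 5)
Your proof is correct and is exactly the routine unpacking of definitions that the paper has in mind when it labels this proposition as obvious and omits a proof. The case split on whether $12\in\sigma$ and whether $\sigma=\{12\}$, together with the monotonicity of $\gamma$ under passage to subgraphs (which is what makes $D_{n,n-2}$ a simplicial complex in the first place), is the whole content.
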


Note that $\mathcal{P}_{12}$ is an acyclic matching in $X_{12}$ (because $\mathcal{P}\hspace{.3mm}'_{\hspace{-.3mm}12}$ is so in $D_{n,1}$) with the simplex in (i) above as its only critical simplex in dimension 0, and with no critical simplices in positive dimensions. The rest of the section is devoted to prove:

\begin{proposition}\label{summa}
There is a matching $\mathcal{Q}$ on the simplices
\begin{equation}\label{r12}
R_{12}:=D_{n,n-2}-X_{12}=\{\sigma\in D_{n,n-2}\colon \sigma+12\not\in D_{n,n-2}\}
\end{equation}
all whose critical (i.e.~unpaired) simplices are 2-dimensional.
\end{proposition}

\begin{corollary}\label{estesieselbueno}
$\mathcal{D}_{n,n-2}:=\mathcal{P}_{12}\cup\mathcal{Q}$ is a matching in $D_{n,n-2}$ with a single critical simplex in dimension 0, and all other critical simplices being 2-dimensional. 
\end{corollary}

\begin{remark}\label{not12}{\em
Note that $12\not\in\sigma$, for all $\sigma\in R_{12}$. Further, by Remark~\ref{partcs}, $X_{12}$ contains the 0-skeleton of $D_{n,n-2}$, so that $R_{12}$ contains simplices only in dimensions in between 1 and 3. 
}\end{remark}

The portion of $\mathcal{Q}$ that matches simplices of dimension 1 with simplices of dimension 2, denoted by $\mathcal{Q}_1^2$, is specified in Definition~\ref{q12} below using:

\begin{proposition}\label{lexi}
Let $\sigma$ be a 1-dimensional cell in $R_{12}$. The set $M_{\sigma}$ consisting of the edges $ij$ satisfying $i<j<n$ and $\sigma+ij\in R_{12}$ is nonempty. 
\end{proposition}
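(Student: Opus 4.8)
The plan is to argue by contradiction: suppose $M_\sigma=\varnothing$ for some 1-dimensional cell $\sigma\in R_{12}$. Recall that $\sigma$ has exactly two edges, $12\notin\sigma$ by Remark~\ref{not12}, and that $\sigma\in R_{12}$ means $\sigma+12\notin D_{n,n-2}$, i.e.\ $\gamma(\sigma+12)\le n-3$. The assumption $M_\sigma=\varnothing$ says that for \emph{every} edge $ij$ with $i<j<n$ and $ij\notin\sigma$, one has $\sigma+ij\notin R_{12}$; since $\sigma+ij$ has three edges, Lemma~\ref{combinatorics2} together with the definition of $R_{12}$ forces $\sigma+ij\in X_{12}$ (when $\gamma(\sigma+ij)\ge n-2$) or $\gamma(\sigma+ij)\le n-3$ outright. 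So the first step is to unwind exactly what it means for none of these single-edge enlargements of $\sigma$ (using only edges disjoint from the vertex $n$) to land in $R_{12}$, and translate everything into statements about domination numbers of small graphs.

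Next I would use the combinatorial classification already available. By Lemma~\ref{combinatorics01}.4, $\gamma(\sigma)=n-2$, and by Proposition~\ref{x12iscomplex} the condition $\sigma+ij\in R_{12}$ is equivalent to $\gamma(\sigma+ij)\ge n-2$ together with $\gamma(\sigma+ij+12)\le n-3$. The plan is to produce an explicit edge $ij$ with $i<j<n$ witnessing $M_\sigma\neq\varnothing$. The natural candidate is an edge on the two vertices of lowest index that are \emph{not} already incident to an edge of $\sigma$ and are distinct from $n$; adding such an edge to a two-edge graph on $n$ vertices typically keeps the domination number at $n-2$ (one checks, via Lemma~\ref{combinatorics2}, that the only three-edge graphs with domination number $<n-2$ are the path $P_4$ and the triangle, i.e.\ the 2-cells listed there, plus the two forbidden shapes), while adding the edge $12$ on top can only lower the domination number if $12$ connects up enough of the graph — and that is precisely what happened to $\sigma$ itself, so it will still happen after adding a cheap, far-away edge. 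The point is that since $n\ge4$ there is always enough room among the vertices $\{1,\dots,n-1\}$ to pick such an $ij$ that avoids creating any of the bad configurations of Lemma~\ref{combinatorics2} and simultaneously does not interfere with the drop $\gamma(\sigma+12)\le n-3$.

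Concretely, I would split into cases according to the isomorphism type of the two-edge graph $\sigma$ (two disjoint edges, or two edges sharing a vertex, i.e.\ a path $P_3$), and in each case according to how the vertex $n$ and the vertex set $\{1,2\}$ sit relative to $\sigma$. For each case one writes down an explicit edge $ij$ with $i<j<n$, verifies $\gamma(\sigma+ij)\ge n-2$ using Lemma~\ref{combinatorics2} (the new three-edge graph is not one of the two forbidden shapes, so it is a 2-cell of $D_{n,n-2}$ or has even larger domination number), and verifies $\gamma(\sigma+ij+12)\le n-3$ — typically by exhibiting a dominating set of size $n-3$, which one can read off from the analogous dominating set for $\sigma+12$ by noting that adding the edge $ij$ does not increase the domination number. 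The bookkeeping is the only real content; each individual verification is a one-line check with an explicit dominating set.

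The main obstacle I anticipate is the case analysis near the ``boundary'': when the vertex $n$ is incident to $\sigma$, or when one or both of the vertices $1,2$ are already used by $\sigma$, the pool of admissible edges $ij$ with $i<j<n$ shrinks, and one must check that it is still nonempty and that a valid choice exists — this is exactly where the hypothesis $n\ge4$ gets used, and it is worth isolating the extremal small case ($n=4$, where $\sigma$ has two edges among four vertices and only $\binom{3}{2}=3$ candidate edges $ij$ with $j<4$) as a separate sanity check before handling general $n$. A clean way to avoid a sprawling argument is to observe up front that $\sigma+12$ having domination number $\le n-3$ already pins down a good deal of the combinatorics of $\sigma$ (it forces a vertex of $\sigma+12$ of degree $\ge 3$, by Lemma~\ref{combinatorics01}.1, hence $12$ must share a vertex with both edges of $\sigma$ or produce a $P_4$/triangle after one more edge), which drastically cuts down the number of cases one actually has to inspect.
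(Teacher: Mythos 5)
Your overall strategy — a case split on the isomorphism type of the two-edge graph $\sigma$ (two disjoint edges vs.\ a $P_3$) followed by the explicit production of a witness $ij$ — is indeed the paper's approach, but your proposed candidate edge is wrong in a way that breaks the argument. You suggest taking $ij$ on ``the two vertices of lowest index that are \emph{not} already incident to an edge of $\sigma$.'' An edge disjoint from $\sigma$ turns a pair of disjoint edges into three pairwise disjoint edges, and a $P_3$ into a $P_3$ plus a disjoint edge; these are \emph{exactly} the two forbidden configurations in Lemma~\ref{combinatorics2}, so $\sigma+ij\notin D_{n,n-2}$ and hence $\sigma+ij\notin R_{12}$. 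The edge must instead \emph{share} a vertex with $\sigma$: in the disjoint-edges case $ab\,|\,cd$ (with $a<b$, $c<d$, $a<c$) the paper takes $ij=ac$, producing a $P_4$; in the $P_3$ case $ab\,|\,bc$ (where one first shows $3\le a<c$) the paper takes $ij=da$ with $d\in\{1,2\}$ chosen so that $db\ne 12$, again producing a $P_4$. Note also that once $\sigma+ij\in D_{n,n-2}$ is secured with $ij\neq 12$, the second requirement $\sigma+ij\notin X_{12}$ is automatic, because $\sigma+12$ is a subgraph of $\sigma+ij+12$ and $\gamma$ can only drop under adding edges; you framed this as a separate thing to verify, but it comes for free.

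Two further slips: your parenthetical reading of Lemma~\ref{combinatorics2} inverts it ($P_4$ and the triangle are the three-edge graphs \emph{with} $\gamma\ge n-2$, i.e.\ the 2-cells, while the two forbidden shapes have $\gamma<n-2$), and your proposed shortcut — that $\gamma(\sigma+12)\le n-3$ forces a vertex of degree $\ge 3$ — is false, since Lemma~\ref{combinatorics01}.1 is only a one-way bound (three pairwise disjoint edges have $\gamma=n-3$ and maximum degree $1$). You also open by saying you will argue by contradiction but then describe a direct construction; the proposition is existential, so the direct construction is the right and only real content.
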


\begin{definition}\label{q12}
Taking the $(i,j)$-lexicographic order on edges $ij$ with $i<j$, $\mathcal{Q}_1^2$ matches a 1-dimensional cell $\sigma\in R_{12}$ with $\sigma+ij$, where $ij$ is the first edge in $M_\sigma$. 
\end{definition}

\begin{example}\label{ejed42}{\em
For $n=4$, the only 1-dimensional simplices in $R_{12}$ are
\begin{equation*}
\begin{tikzpicture}[x=.6cm,y=.6cm]
\draw(9,1)--(9,2);\draw(9,2)--(10,1);
\node at (9,2) {\scriptsize$\bullet$};\node at (8.6,2) {\scriptsize$1$};
\node at (9,1) {\scriptsize$\bullet$};\node at (8.6,1) {\scriptsize$3$};
\node at (10,2) {\scriptsize$\bullet$};\node at (10.4,2) {\scriptsize$2$};
\node at (10,1) {\scriptsize$\bullet$};\node at (10.4,1) {\scriptsize$4$};

\draw(15,1)--(15,2);\draw(14,1)--(15,2);
\node at (14,2) {\scriptsize$\bullet$};\node at (13.6,2) {\scriptsize$1$};
\node at (14,1) {\scriptsize$\bullet$};\node at (13.6,1) {\scriptsize$3$};
\node at (15,2) {\scriptsize$\bullet$};\node at (15.4,2) {\scriptsize$2$};
\node at (15,1) {\scriptsize$\bullet$};\node at (15.4,1) {\scriptsize$4$};
\end{tikzpicture}
\end{equation*}
and their respective $\mathcal{Q}_1^2$ pairs are
\begin{equation*}
\begin{tikzpicture}[x=.6cm,y=.6cm]
\draw(9,1)--(9,2);\draw(9,2)--(10,1);\draw(10,2)--(9,1);
\node at (9,2) {\scriptsize$\bullet$};\node at (8.6,2) {\scriptsize$1$};
\node at (9,1) {\scriptsize$\bullet$};\node at (8.6,1) {\scriptsize$3$};
\node at (10,2) {\scriptsize$\bullet$};\node at (10.4,2) {\scriptsize$2$};
\node at (10,1) {\scriptsize$\bullet$};\node at (10.4,1) {\scriptsize$4$};

\draw(15,1)--(15,2);\draw(14,1)--(15,2);\draw(14,2)--(14,1);
\node at (14,2) {\scriptsize$\bullet$};\node at (13.6,2) {\scriptsize$1$};
\node at (14,1) {\scriptsize$\bullet$};\node at (13.6,1) {\scriptsize$3$};
\node at (15,2) {\scriptsize$\bullet$};\node at (15.4,2) {\scriptsize$2$};
\node at (15,1) {\scriptsize$\bullet$};\node at (15.4,1) {\scriptsize$4$};
\end{tikzpicture}
\end{equation*}
}\end{example}

The graphs in Example~\ref{ejed42} are linear, a situation that holds in general:

\begin{proof}[Proof of Proposition~\ref{lexi}]
We consider the only two possible forms the simplex $\sigma$ can have.

\medskip\noindent
{\bf Case I.} Assume $\sigma$ has the form
\begin{equation}\label{opt1}
\raisebox{-4.5mm}{
\begin{tikzpicture}[x=.6cm,y=.6cm]
\draw(11,1)--(10,1);\draw(10,2)--(11,2);
\node at (11,2) {\scriptsize$\bullet$};\node [above] at (11,2) {\scriptsize $b$};
\node at (11,1) {\scriptsize$\bullet$};\node [above] at (10,2) {\scriptsize $a$};
\node at (10,1) {\scriptsize$\bullet$};\node [below] at (10,1) {\scriptsize $c$};
\node at (10,2) {\scriptsize$\bullet$};\node [below] at (11,1) {\scriptsize $d$};
\node at (12.5,1.5) {\footnotesize{${}+(n-4)$,}};
\end{tikzpicture}
}
\end{equation}
where $ab\neq12\neq cd$. Without loss of generality, we can assume $a<b$, $c<d$ and $a<c$. By~(\ref{endim2}) and~(\ref{r12}), we have $\sigma+ac\in D_{n,n-2}$ with $ac\neq12$. (The latter condition explains why this case is not part of Example~\ref{ejed42}.) Further, the characterization of 3-dimensional simplices in $D_{n,n-2}$ coming from Vizig's Theorem~\ref{vizing} gives $\sigma+ac+12\notin D_{n,n-2}$, so that~(\ref{r12}) gives $\sigma+ac\in R_{12}$. Thus $ac\in M_\sigma$. In fact, $ac$ is the first element in $M_\sigma$, for the characterization of 2-dimensional simplices in $D_{n,n-2}$ coming from~(\ref{endim2}) and Lemma~\ref{combinatorics2} yields $M_\sigma\subseteq\{ac,ad,bc,bd\}$. Thus, in this case we have the $\mathcal{Q}_1^2$ matching
\begin{equation}\label{qmatch1}\raisebox{-.7cm}{
\begin{tikzpicture}[x=.6cm,y=.6cm]
\draw(11,-.5)--(10,-.5);\draw(10,.5)--(11,.5);
\node at (11,.5) {\scriptsize$\bullet$};\node [above] at (11,.5) {\scriptsize $b$};
\node at (11,-.5) {\scriptsize$\bullet$};\node [above] at (10,.5) {\scriptsize $a$};
\node at (10,-.5) {\scriptsize$\bullet$};\node [below] at (10,-.5) {\scriptsize $c$};
\node at (10,.5) {\scriptsize$\bullet$};\node [below] at (11,-.5) {\scriptsize $d$};
\node at (12.7,0) {\footnotesize{${}+(n-4)$}};

\draw(20.5,-.5)--(19.5,-.5);\draw(20.5,.5)--(19.5,.5);\draw(19.5,-.5)--(19.5,.5);
\node at (20.5,.5) {\scriptsize$\bullet$};\node [above] at (20.5,.5) {\scriptsize $b$};
\node at (20.5,-.5) {\scriptsize$\bullet$};\node [above] at (19.5,.5) {\scriptsize $a$};
\node at (19.5,-.5) {\scriptsize$\bullet$};\node [below] at (19.5,-.5) {\scriptsize $c$};
\node at (19.5,.5) {\scriptsize$\bullet$};\node [below] at (20.5,-.5) {\scriptsize $d$};
\node at (22,0) {\footnotesize{${}+(n-4).$}};
\draw[thick,->](16,-.5)--(17,.5);
\end{tikzpicture}}
\end{equation}

\medskip\noindent
{\bf Case II.} Assume $\sigma$ has the form
\begin{equation}\label{opt2}
\raisebox{-6.4mm}{
\begin{tikzpicture}[x=.6cm,y=.6cm]
\draw(2,1.5)--(3,1.5);\draw(3,1.5)--(3,.5);
\node at (2,1.5) {\scriptsize$\bullet$};\node [above] at (2,1.5) {\scriptsize $a$};
\node at (3,1.5) {\scriptsize$\bullet$};\node [above] at (3,1.5) {\scriptsize $b$};
\node at (3,.5) {\scriptsize$\bullet$};\node [below] at (3,.5) {\scriptsize $c$};
\node at (2,1.5) {\scriptsize$\bullet$};
\node at (4.4,1) {\footnotesize{\ \ ${}+(n-3)$,}};
\end{tikzpicture}
}
\end{equation}
where $ab\neq12\neq bc$. Without loss of generality we can assume $a<c$. If $a=1$ (so $b\neq2$), then $c\neq2$ (in view of~(\ref{r12}) and Lemma~\ref{combinatorics2}), so $\sigma+12\in D_{n,n-2}$ in view of~(\ref{endim2}), which contradicts~(\ref{r12}). Likewise, the equality $a=2$ cannot hold, and we actually have $3\le a<c$. Let $d=2$ ($d=1$) if $b=1$ ($b>1$). Using once again~(\ref{r12}) and the characterization of 2-dimensional simplices in $D_{n,n-2}$ coming from~(\ref{endim2}) and Lemma~\ref{combinatorics2}, we see that $\sigma+da\in R_{12}$ and, in fact, that $da$ is the first element in $M_\sigma$. So we have the $\mathcal{Q}_1^2$ matching
\begin{equation}\label{qmatch2}\raisebox{-.7cm}{
\begin{tikzpicture}[x=.6cm,y=.6cm]
\draw(11,-.5)--(11,.5);\draw(10,.5)--(11,.5);
\node at (11,.5) {\scriptsize$\bullet$};\node [above] at (11,.5) {\scriptsize $b$};
\node at (11,-.5) {\scriptsize$\bullet$};\node [above] at (10,.5) {\scriptsize $a$};
\node at (10,.5) {\scriptsize$\bullet$};\node [below] at (11,-.5) {\scriptsize $c$};
\node at (12.5,0) {\footnotesize{${}+(n-4)$}};

\draw(20.5,-.5)--(20.5,.5);\draw(20.5,.5)--(19.5,.5);\draw(19.5,-.5)--(19.5,.5);
\node at (20.5,.5) {\scriptsize$\bullet$};\node [above] at (20.5,.5) {\scriptsize $b$};
\node at (20.5,-.5) {\scriptsize$\bullet$};\node [above] at (19.5,.5) {\scriptsize $a$};
\node at (19.5,-.5) {\scriptsize$\bullet$};\node [below] at (19.5,-.5) {\scriptsize $d$};
\node at (19.5,.5) {\scriptsize$\bullet$};\node [below] at (20.5,-.5) {\scriptsize $c$};
\node at (22,0) {\footnotesize{${}+(n-4)$}};
\draw[thick,->](16,-.5)--(17,.5);
\end{tikzpicture}
}
\end{equation}
In both cases above, the condition ``$i<j<n$'' in the statement of the proposition holds by construction.
\end{proof}

The previous proof makes use of the characterization of $d$-dimensional simplices of $D_{n,n-2}$ discussed in Section~\ref{structuresection}, and we have carefully poinpointed the characterization case (either $d=2$ or $d=3$) needed at each step of the argument. Some of the arguments below will also make use of these characterizations, and we will make free use of them.

\smallskip
Next we define $\mathcal{Q}_2^3$, the pairing $\mathcal{Q}$ we need in $R_{12}$ between simplices of dimension 2 and simplices of dimension 3. This time we define the matching pair of any 3-simplex $\sigma\in R_{12}$. In short, the rule (in Definition~\ref{q23} below) is that the $\mathcal{Q}_2^3$ matching pair of a 3-simplex $\sigma\in R_{12}$ is obtained by removing the first edge of $\sigma$ (in the lexicographic order). We need:

\begin{proposition}\label{lexi2}
Let $\sigma=a_0b_0|a_1b_1|a_2b_2|a_3b_3$ be a 3-simplex in $R_{12}$, with $a_i<b_i$ for $0\le i\le3$, and $(a_0,b_0)<(a_1,b_1)<(a_2,b_2)<(a_3,b_3)$ ordered lexicographically. Then $a_1b_1|a_2b_2|a_3b_3\!:$
\begin{enumerate}
\item is a 2-simplex in $R_{12}$ which is not paired under $\mathcal{Q}_1^2$;
\item determines $\sigma$; indeed, the latter is the only 3-cell in $R_{12}$ (actually in $D_{n,n-2}$) having the former as a face, i.e.~$a_1b_1|a_2b_2|a_3b_3$ is a free face of $a_0b_0|a_1b_1|a_2b_2|a_3b_3$.
\end{enumerate}
\end{proposition}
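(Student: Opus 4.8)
\textbf{Proof strategy for Proposition~\ref{lexi2}.}
The plan is to exploit the very rigid structure of 3-cells in $D_{n,n-2}$ provided by Theorem~\ref{vizing}: a 3-cell is a 4-cycle on four of the vertices together with $n-4$ isolated vertices. So write $\sigma$ concretely as a 4-cycle, say on vertices $\{p,q,r,s\}$ (a subset of $\overline{n}$), whose four edges, listed in lexicographic order, are $a_0b_0<a_1b_1<a_2b_2<a_3b_3$. First I would record Remark~\ref{not12}: since $\sigma\in R_{12}$ we have $12\notin\sigma$, and in fact $\sigma+12\notin D_{n,n-2}$; moreover Lemma~\ref{combinatorics01}.4 together with the characterization of 2-cells in Lemma~\ref{combinatorics2} gives a clean combinatorial handle on which 3-edge subgraphs lie in $D_{n,n-2}$ (namely a path on 4 vertices or a triangle, plus isolated vertices) and which do not (the two forbidden shapes in Lemma~\ref{combinatorics2}).

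For part~(2), the free-face claim, observe that the 2-face $a_1b_1|a_2b_2|a_3b_3$ of the 4-cycle $\sigma$ is obtained by deleting the lexicographically smallest edge of the cycle; since a 4-cycle has no repeated vertices, deleting one edge leaves a path $P_4$ on those same four vertices — a 2-cell of the first type in Lemma~\ref{combinatorics2}. Now I must show this $P_4$ (plus $n-4$ isolated vertices) is a face of no 3-cell of $D_{n,n-2}$ other than $\sigma$ itself. Any 3-cell containing it is a 4-cycle containing the path $P_4$ as a subgraph; a path on four distinct vertices $u$–$v$–$w$–$x$ extends to a 4-cycle in exactly one way, by adding the edge $ux$ joining its two endpoints — and that 4-cycle is precisely $\sigma$. (Adding any other edge to $P_4$ either creates a vertex of degree 3, or reuses a vertex and produces a triangle-with-pendant or a path-plus-disjoint-edge shape; one then checks via Lemma~\ref{combinatorics2} and Theorem~\ref{vizing} that these have domination number $<n-2$, i.e.\ are not in $D_{n,n-2}$, let alone 3-cells.) This establishes that $a_1b_1|a_2b_2|a_3b_3$ is a free face of $\sigma$, hence also that it determines $\sigma$.

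For part~(1): the 2-face $\tau:=a_1b_1|a_2b_2|a_3b_3$ lies in $R_{12}$ because $\tau+12$ would have to be in $D_{n,n-2}$ for $\tau\in X_{12}$ (Proposition~\ref{x12iscomplex}), but $\tau\subset\sigma$ and adding $12$ to a path $P_4$-plus-isolated-vertices yields one of the 3-edge graphs barred by Lemma~\ref{combinatorics2} — or else forces the recovered 4-cycle $\sigma$ to contain $12$, contradicting Remark~\ref{not12}; in either case $\tau+12\notin D_{n,n-2}$, so $\tau\in R_{12}$. It remains to check that $\tau$ is not the $\mathcal{Q}_1^2$-image of any 1-cell. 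By Definition~\ref{q12}, the $\mathcal{Q}_1^2$-matched 2-cells are exactly those of the form $\sigma'+ij$ where $ij$ is the \emph{lexicographically first} edge $e$ with $\sigma'+e\in R_{12}$, for a 1-cell $\sigma'\in R_{12}$; moreover from the two cases (\ref{qmatch1})–(\ref{qmatch2}) in the proof of Proposition~\ref{lexi}, every such 2-cell has the property that, when its three edges are listed lexicographically, the \emph{smallest} one is the edge $ij$ that was added, and crucially $ij<n$ in at least one coordinate in a way that makes removing it return a $P_4$ that is \emph{not} a 4-cycle minus an edge — equivalently, the 2-cell sits in the image only if its lexicographically-smallest edge is removable back into $R_{12}$. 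I would make this precise by arguing that in $\tau=a_1b_1|a_2b_2|a_3b_3$ the smallest edge $a_1b_1$ is \emph{not} the $\mathcal{Q}_1^2$-added edge of any 1-cell: removing $a_1b_1$ from $\tau$ gives a 1-cell $\sigma'$, but by part~(2) the unique 3-cell over $\tau$ is $\sigma=a_0b_0|\tau$ with $a_0b_0<a_1b_1$, so the edge $a_0b_0$ is a strictly smaller edge with $\sigma'+a_0b_0=\sigma\in R_{12}$, whence $a_1b_1$ is never ``first'' for $\sigma'$; and $\tau$ cannot arise by adding any of $a_2b_2$ or $a_3b_3$ to a 1-cell either, since for a 1-cell of the forms (\ref{opt1})–(\ref{opt2}) the proof of Proposition~\ref{lexi} pins down the added edge uniquely, and a short case-check shows it is never the largest or middle edge of the resulting triangle/path. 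Hence $\tau$ is unpaired under $\mathcal{Q}_1^2$.

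The main obstacle is the bookkeeping in part~(1): ruling out that $\tau$ lies in the image of $\mathcal{Q}_1^2$ requires matching $\tau$'s edge-structure against both normal forms (\ref{qmatch1}) and (\ref{qmatch2}) and using the explicit description of which edge gets added in each; the cleanest route is the one indicated above — leverage part~(2) to produce, for the candidate source 1-cell $\sigma'=\tau-a_1b_1$, a strictly smaller admissible edge $a_0b_0$, so $\mathcal{Q}_1^2$ could not have chosen $a_1b_1$; the remaining sub-cases (the added edge being $a_2b_2$ or $a_3b_3$) are dispatched by the uniqueness of the added edge established in the proof of Proposition~\ref{lexi}. Everything else is a direct appeal to Theorem~\ref{vizing}, Lemma~\ref{combinatorics2}, and Remark~\ref{not12}.
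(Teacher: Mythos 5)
Your argument for part~(2) is correct and essentially the paper's: a path on four distinct vertices extends to a 4-cycle in exactly one way, so by Vizing's theorem the 2-cell $a_1b_1|a_2b_2|a_3b_3$ has $\sigma$ as its unique 3-coface.

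The strategy you take for part~(1) is genuinely different from the paper's, but as written it contains both an error and a gap. You claim that, setting $\sigma':=\tau-a_1b_1$, one has ``$\sigma'+a_0b_0=\sigma\in R_{12}$.'' This is false: $\sigma'+a_0b_0$ is the \emph{three}-edge graph $a_0b_0|a_2b_2|a_3b_3$, not the four-edge graph $\sigma$. What your argument actually needs is $a_0b_0\in M_{\sigma'}$, i.e.\ $\sigma'+a_0b_0\in R_{12}$ together with the condition $a_0<b_0<n$ built into Proposition~\ref{lexi}. Both are in fact true (the first by checking, for each placement of the labels $1,2$, that $(\sigma'+a_0b_0)+12\notin D_{n,n-2}$; the second because $a_0=a_1$ forces $b_0<b_1\le n$), but neither follows just from $a_0b_0$ being an edge of $\sigma$, and neither is verified in your write-up. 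Likewise, the assertion that ``a short case-check shows'' $\tau$ cannot arise as $\mathcal{Q}_1^2(\tau-a_2b_2)$ or $\mathcal{Q}_1^2(\tau-a_3b_3)$ is stated without being carried out, and this is exactly where one must confront the normal forms~(\ref{qmatch1}) and~(\ref{qmatch2}). The paper sidesteps the case split on which edge was added by a single structural comparison: in $a_1b_1|a_2b_2|a_3b_3$ the smallest-labeled vertex of the nontrivial component has degree~1, which rules out~(\ref{qmatch1}) and the $b=1$ branch of~(\ref{qmatch2}) at once, and a secondary comparison (the higher-labeled degree-1 vertex against the degree-2 vertex adjacent to the lower-labeled degree-1 vertex) eliminates the remaining branch of~(\ref{qmatch2}). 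If you want to keep your strategy, replace the false identity by a genuine verification of $a_0b_0\in M_{\sigma'}$, and actually run the two remaining sub-cases against~(\ref{qmatch1}) and~(\ref{qmatch2}).
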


\begin{definition}\label{q23}
Under the conditions in Proposition~\ref{lexi2}, the $\mathcal{Q}_2^3$ matching pair of $\sigma$ is defined to be the simplex $\,a_1b_1|a_2b_2|a_3b_3$.
\end{definition}

\begin{proof}[Proof of Proposition~\ref{lexi2}]
By Vizing's Theorem~\ref{vizing}, $\sigma$ has the form
$$
\begin{tikzpicture}[x=.6cm,y=.6cm]
\draw(15.5,-.5)--(15.5,.5);\draw(15.5,.5)--(14.5,.5);\draw(14.5,-.5)--(14.5,.5);\draw(14.5,-.5)--(15.5,-.5);
\node at (15.5,.5) {\scriptsize$\bullet$};\node [above] at (15.5,.5) {\scriptsize $b$};
\node at (15.5,-.5) {\scriptsize$\bullet$};\node [above] at (14.5,.5) {\scriptsize $a$};
\node at (14.5,-.5) {\scriptsize$\bullet$};\node [below] at (14.5,-.5) {\scriptsize $c$};
\node at (14.5,.5) {\scriptsize$\bullet$};\node [below] at (15.5,-.5) {\scriptsize $d$};
\node at (17,0) {\footnotesize{${}+(n-4)$,}};
\end{tikzpicture}
$$
where we can safely assume $a<\min\{b,c,d\}$ and $b<c$. Under these conditions $ab$ plays the role of $a_0b_0$ in the statement of the proposition, and $a_1b_1|a_2b_2|a_3b_3$ becomes
\begin{equation}\label{q23match}
\raisebox{-7mm}{
\begin{tikzpicture}[x=.6cm,y=.6cm]
\draw(15.5,-.5)--(15.5,.5);\draw(14.5,-.5)--(14.5,.5);\draw(14.5,-.5)--(15.5,-.5);
\node at (15.5,.5) {\scriptsize$\bullet$};\node [above] at (15.5,.5) {\scriptsize $b$};
\node at (15.5,-.5) {\scriptsize$\bullet$};\node [above] at (14.5,.5) {\scriptsize $a$};
\node at (14.5,-.5) {\scriptsize$\bullet$};\node [below] at (14.5,-.5) {\scriptsize $c$};
\node at (14.5,.5) {\scriptsize$\bullet$};\node [below] at (15.5,-.5) {\scriptsize $d$};
\node at (17,0) {\footnotesize{${}+(n-4)$.}};
\end{tikzpicture}}
\end{equation}
Note that the latter 2-simplex lies in $R_{12}$ and determines $\sigma$ (as indicated in the statement of the proposition) in view of Vizing's theorem and Remark~\ref{not12}. To complete the proof, it suffices to check that~(\ref{q23match}) cannot appear as the higher dimensional simplex in~(\ref{qmatch1}) or in~(\ref{qmatch2}). For this, note that the degree of the vertex with the smallest label in the non-trivial component of~(\ref{q23match}) ---i.e., vertex $a$--- equals~1. This immediately rules out the case of~(\ref{qmatch1}), as well as the case of~(\ref{qmatch2}) when $b=1$ (this ``$b$'' is used in the context of the notation of~(\ref{qmatch2})). To rule out the remaining case, i.e.~the case of~(\ref{qmatch2}) where (its) vertex $b$ is greater than 1 (so the corresponding label $d$ is 1), it suffices to compare the labels of the two vertices:
\begin{itemize}
\item[(i)] vertex of degree 1 with the higher label (this is vertex $b$ in~(\ref{q23match}), and vertex $c$ in~(\ref{qmatch2}));
\item[(ii)] vertex of degree 2 which is adjacent to the vertex of degree 1 with the smaller label (this is vertex $c$ in~(\ref{q23match}), and vertex $a$ in~(\ref{qmatch2})).
\end{itemize}
For, in the case of~(\ref{q23match}), the label of vertex in (i) is smaller than the label of the vertex in~(ii), whereas the opposite inequality holds in the remaining case of~(\ref{qmatch2}).
\end{proof}

\begin{proof}[Proof of Proposition~\ref{summa}]
In view of Proposition~\ref{lexi2}, it only remains to prove that if two 1-dimensional cells $\sigma,\sigma'\in R_{12}$ are $\mathcal{Q}$-paired to a common 2-dimensional cell in $R_{12}$, then in fact $\sigma=\sigma'$. We consider all possible cases arising from the combination of the forms ((\ref{opt1}) or~(\ref{opt2})) of $\sigma$ and~$\sigma'$.

\medskip\noindent
{\bf Case I}. Assume $\sigma$ and $\sigma'$ have, respectively, the forms
\begin{equation*}
\begin{tikzpicture}[x=.6cm,y=.6cm]
\draw(2,2)--(2,1);\draw(1,2)--(2,2);
\node [above] at (2,2) {\scriptsize $b$};\node [above] at (1,2) {\scriptsize $a$};\node [below] at (2,1) {\scriptsize $c$};
\node at (1,2) {\scriptsize$\bullet$};
\node at (2,2) {\scriptsize$\bullet$};\node at (2,1) {\scriptsize$\bullet$};
\node at (3.5,1.5) {\footnotesize{${}+(n-3)$}};

\node at (6.6,1.5) {and};

\draw(9,1)--(10,1);\draw(9,2)--(10,2);
\node at (9,2) {\scriptsize$\bullet$};\node at (9,1) {\scriptsize$\bullet$};
\node at (10,2) {\scriptsize$\bullet$};\node at (10,1) {\scriptsize$\bullet$};
\node [above] at (9.1,2) {\scriptsize $a'$};
\node [above] at (10.1,2) {\scriptsize $b'$};
\node [below] at (9,1) {\scriptsize $c'$};
\node [below] at (10.1,1) {\scriptsize $d'$};
\node at (11.5,1.5) {\footnotesize{${}+(n-4)$}};
\end{tikzpicture}
\end{equation*}
with $3\leq a<c$, $a'<b'$, $a'<c'<d'$ and, additionally, $b=1$. Then the common $\mathcal{Q}$-matched pair would be
\begin{equation*}
\begin{tikzpicture}[x=.6cm,y=.6cm]
\draw(2,2)--(2,1);\draw(1,2)--(2,2);\draw(1,1)--(1,2);
\node [above] at (2,2) {\scriptsize $1$};\node [above] at (1,2) {\scriptsize $a$};
\node [below] at (2,1) {\scriptsize $c$};\node [below] at (1,1) {\scriptsize $2$};
\node at (1,2) {\scriptsize$\bullet$};\node at (1,1) {\scriptsize$\bullet$};
\node at (2,2) {\scriptsize$\bullet$};\node at (2,1) {\scriptsize$\bullet$};
\node at (3.5,1.5) {\footnotesize{${}+(n-4)$}};

\node at (6.8,1.5) {$=$};

\draw(9,1)--(10,1);\draw(9,2)--(10,2);\draw(9,2)--(9,1);
\node at (9,2) {\scriptsize$\bullet$};\node at (9,1) {\scriptsize$\bullet$};
\node at (10,2) {\scriptsize$\bullet$};\node at (10,1) {\scriptsize$\bullet$};
\node [above] at (9.1,2) {\scriptsize $a'$};
\node [above] at (10.1,2) {\scriptsize $b'$};
\node [below] at (9,1) {\scriptsize $c'$};
\node [below] at (10.1,1) {\scriptsize $d'$};
\node at (11.5,1.5) {\footnotesize{${}+(n-4)$,}};
\end{tikzpicture}
\end{equation*}
giving:
\begin{itemize}
\item $\{1,a\}=\{a',c'\}$, which is possible only with $a'=1$ and $a=c'$ (as $a'<c'$);
\item $\{2,c\}=\{b',d'\}$, which is possible only with $2=d'$ and $c=b'$ (as $a'=1$ and $12\not\in\sigma'$).
\end{itemize}
But then $1=a'<c'<d'=2$, which is impossible.

\medskip\noindent
{\bf Case II}. Assume $\sigma$ and $\sigma'$ have, respectively, the forms
\begin{equation*}
\begin{tikzpicture}[x=.6cm,y=.6cm]
\draw(2,2)--(2,1);\draw(1,2)--(2,2);
\node [above] at (2,2) {\scriptsize $b$};\node [above] at (1,2) {\scriptsize $a$};\node [below] at (2,1) {\scriptsize $c$};
\node at (1,2) {\scriptsize$\bullet$};
\node at (2,2) {\scriptsize$\bullet$};\node at (2,1) {\scriptsize$\bullet$};
\node at (3.5,1.5) {\footnotesize{${}+(n-3)$}};

\node at (6.6,1.5) {and};

\draw(9,1)--(10,1);\draw(9,2)--(10,2);
\node at (9,2) {\scriptsize$\bullet$};\node at (9,1) {\scriptsize$\bullet$};
\node at (10,2) {\scriptsize$\bullet$};\node at (10,1) {\scriptsize$\bullet$};
\node [above] at (9.1,2) {\scriptsize $a'$};
\node [above] at (10.1,2) {\scriptsize $b'$};
\node [below] at (9,1) {\scriptsize $c'$};
\node [below] at (10.1,1) {\scriptsize $d'$};
\node at (11.5,1.5) {\footnotesize{${}+(n-4)$}};
\end{tikzpicture}
\end{equation*}
with $3\leq a<c$, $a'<b'$, $a'<c'<d'$ and, additionally, $b>1$. Then the common $\mathcal{Q}$-matched pair would be
\begin{equation*}
\begin{tikzpicture}[x=.6cm,y=.6cm]
\draw(2,2)--(2,1);\draw(1,2)--(2,2);\draw(1,1)--(1,2);
\node [above] at (2,2) {\scriptsize $b$};\node [above] at (1,2) {\scriptsize $a$};
\node [below] at (2,1) {\scriptsize $c$};\node [below] at (1,1) {\scriptsize $1$};
\node at (1,2) {\scriptsize$\bullet$};\node at (1,1) {\scriptsize$\bullet$};
\node at (2,2) {\scriptsize$\bullet$};\node at (2,1) {\scriptsize$\bullet$};
\node at (3.5,1.5) {\footnotesize{${}+(n-4)$}};

\node at (6.8,1.5) {$=$};

\draw(9,1)--(10,1);\draw(9,2)--(10,2);\draw(9,2)--(9,1);
\node at (9,2) {\scriptsize$\bullet$};\node at (9,1) {\scriptsize$\bullet$};
\node at (10,2) {\scriptsize$\bullet$};\node at (10,1) {\scriptsize$\bullet$};
\node [above] at (9.1,2) {\scriptsize $a'$};
\node [above] at (10.1,2) {\scriptsize $b'$};
\node [below] at (9,1) {\scriptsize $c'$};
\node [below] at (10.1,1) {\scriptsize $d'$};
\node at (11.5,1.5) {\footnotesize{${}+(n-4)$,}};
\end{tikzpicture}
\end{equation*}
giving $1\in\{1,c\}=\{b',d'\}$, which is impossible as both $b'$ and $d'$ are at least 2.

\medskip\noindent
{\bf Case III} Assume $\sigma$ and $\sigma'$ have, respectively, the forms
\begin{equation*}
\begin{tikzpicture}[x=.6cm,y=.6cm]
\draw(1,1)--(2,1);\draw(1,2)--(2,2);
\node [above] at (2,2) {\scriptsize $b$};\node [above] at (1,2) {\scriptsize $a$};
\node [below] at (1,1) {\scriptsize $c$};\node [below] at (2,1) {\scriptsize $d$};
\node at (1,2) {\scriptsize$\bullet$};\node at (1,1) {\scriptsize $\bullet$};
\node at (2,2) {\scriptsize$\bullet$};\node at (2,1) {\scriptsize$\bullet$};
\node at (3.5,1.5) {\footnotesize{${}+(n-4)$}};

\node at (6.6,1.5) {and};

\draw(9,1)--(10,1);\draw(9,2)--(10,2);
\node at (9,2) {\scriptsize$\bullet$};\node at (9,1) {\scriptsize$\bullet$};
\node at (10,2) {\scriptsize$\bullet$};\node at (10,1) {\scriptsize$\bullet$};
\node [above] at (9.1,2) {\scriptsize $a'$};
\node [above] at (10.1,2) {\scriptsize $b'$};
\node [below] at (9,1) {\scriptsize $c'$};
\node [below] at (10.1,1) {\scriptsize $d'$};
\node at (11.5,1.5) {\footnotesize{${}+(n-4)$}};
\end{tikzpicture}
\end{equation*}
with $a<b$, $a<c<d$ and $a'<b'$, $a'<c'<d'$. Then the common $\mathcal{Q}$-matched pair would be
\begin{equation*}
\begin{tikzpicture}[x=.6cm,y=.6cm]
\draw(1,1)--(2,1);\draw(1,2)--(2,2);\draw(1,1)--(1,2);
\node [above] at (2,2) {\scriptsize $b$};\node [above] at (1,2) {\scriptsize $a$};
\node [below] at (2,1) {\scriptsize $d$};\node [below] at (1,1) {\scriptsize $c$};
\node at (1,2) {\scriptsize$\bullet$};\node at (1,1) {\scriptsize$\bullet$};
\node at (2,2) {\scriptsize$\bullet$};\node at (2,1) {\scriptsize$\bullet$};
\node at (3.5,1.5) {\footnotesize{${}+(n-4)$}};

\node at (6.8,1.5) {$=$};

\draw(9,1)--(10,1);\draw(9,2)--(10,2);\draw(9,2)--(9,1);
\node at (9,2) {\scriptsize$\bullet$};\node at (9,1) {\scriptsize$\bullet$};
\node at (10,2) {\scriptsize$\bullet$};\node at (10,1) {\scriptsize$\bullet$};
\node [above] at (9.1,2) {\scriptsize $a'$};
\node [above] at (10.1,2) {\scriptsize $b'$};
\node [below] at (9,1) {\scriptsize $c'$};
\node [below] at (10.1,1) {\scriptsize $d'$};
\node at (11.5,1.5) {\footnotesize{${}+(n-4)$,}};
\end{tikzpicture}
\end{equation*}
giving $b=b'$ or $b=d'$. Either way we get $\sigma=\sigma'$, for
\begin{itemize}
\item $b=b'\Rightarrow\left( \hspace{.3mm}a=a', \; c=c' \mbox{ and } d=d'\,\right) \Rightarrow\sigma=\sigma'$.
\item $b=d'\Rightarrow\left( \hspace{.3mm}a=c', \; c=a' \mbox{ and } d=b'\,\right) \Rightarrow\sigma=\sigma'$.
\end{itemize}

\smallskip\noindent
{\bf Case IV}. Assume $\sigma$ and $\sigma'$ have, respectively, the forms
\begin{equation*}
\begin{tikzpicture}[x=.6cm,y=.6cm]
\draw(2,2)--(2,1);\draw(1,2)--(2,2);
\node [above] at (2,2) {\scriptsize $b$};\node [above] at (1,2) {\scriptsize $a$};\node [below] at (2,1) {\scriptsize $c$};
\node at (1,2) {\scriptsize$\bullet$};
\node at (2,2) {\scriptsize$\bullet$};\node at (2,1) {\scriptsize$\bullet$};
\node at (3.5,1.5) {\footnotesize{${}+(n-3)$}};

\node at (6.6,1.5) {and};

\draw(10,2)--(10,1);\draw(9,2)--(10,2);
\node at (9,2) {\scriptsize$\bullet$};
\node at (10,2) {\scriptsize$\bullet$};\node at (10,1) {\scriptsize$\bullet$};
\node [above] at (9.1,2) {\scriptsize $a'$};
\node [above] at (10.1,2) {\scriptsize $b'$};
\node [below] at (10.1,1) {\scriptsize $c'$};
\node at (11.5,1.5) {\footnotesize{${}+(n-3)$}};
\end{tikzpicture}
\end{equation*}
with $3\leq a<c$, $3\leq a'<c'$ and, additionally, $b=b'$. Then the common $\mathcal{Q}$-matched pair would be
\begin{equation*}
\begin{tikzpicture}[x=.6cm,y=.6cm]
\draw(2,2)--(2,1);\draw(1,2)--(2,2);\draw(1,1)--(1,2);
\node [above] at (2,2) {\scriptsize $b$};\node [above] at (1,2) {\scriptsize $a$};
\node [below] at (2,1) {\scriptsize $c$};\node [below] at (1,1) {\scriptsize $d$};
\node at (1,2) {\scriptsize$\bullet$};\node at (1,1) {\scriptsize$\bullet$};
\node at (2,2) {\scriptsize$\bullet$};\node at (2,1) {\scriptsize$\bullet$};
\node at (3.5,1.5) {\footnotesize{${}+(n-4)$}};

\node at (6.8,1.5) {$=$};

\draw(10,2)--(10,1);\draw(9,2)--(10,2);\draw(9,2)--(9,1);
\node at (9,2) {\scriptsize$\bullet$};\node at (9,1) {\scriptsize$\bullet$};
\node at (10,2) {\scriptsize$\bullet$};\node at (10,1) {\scriptsize$\bullet$};
\node [above] at (9.1,2) {\scriptsize $a'$};
\node [above] at (10.1,2) {\scriptsize $b'$};
\node [below] at (9.1,1) {\scriptsize $d'$};
\node [below] at (10.1,1) {\scriptsize $c'$};
\node at (11.5,1.5) {\footnotesize{${}+(n-4)$,}};
\end{tikzpicture}
\end{equation*}
with $d=d'\in\{1,2\}$, which implies $\sigma=\sigma'$.

\medskip\noindent
{\bf Case V}. Assume $\sigma$ and $\sigma'$ have, respectively, the forms
\begin{equation*}
\begin{tikzpicture}[x=.6cm,y=.6cm]
\draw(2,2)--(2,1);\draw(1,2)--(2,2);
\node [above] at (2,2) {\scriptsize $b$};\node [above] at (1,2) {\scriptsize $a$};\node [below] at (2,1) {\scriptsize $c$};
\node at (1,2) {\scriptsize$\bullet$};
\node at (2,2) {\scriptsize$\bullet$};\node at (2,1) {\scriptsize$\bullet$};
\node at (3.5,1.5) {\footnotesize{${}+(n-3)$}};

\node at (6.6,1.5) {and};

\draw(10,2)--(10,1);\draw(9,2)--(10,2);
\node at (9,2) {\scriptsize$\bullet$};
\node at (10,2) {\scriptsize$\bullet$};\node at (10,1) {\scriptsize$\bullet$};
\node [above] at (9.1,2) {\scriptsize $a'$};
\node [above] at (10.1,2) {\scriptsize $b'$};
\node [below] at (10.1,1) {\scriptsize $c'$};
\node at (11.5,1.5) {\footnotesize{${}+(n-3)$}};
\end{tikzpicture}
\end{equation*}
with $3\leq a<c$, $3\leq a'<c'$ and, additionally, $b>1=b'$. Then the common $\mathcal{Q}$-matched pair would be
\begin{equation*}
\begin{tikzpicture}[x=.6cm,y=.6cm]
\draw(2,2)--(2,1);\draw(1,2)--(2,2);\draw(1,1)--(1,2);
\node [above] at (2,2) {\scriptsize $b$};\node [above] at (1,2) {\scriptsize $a$};
\node [below] at (2,1) {\scriptsize $c$};\node [below] at (1,1) {\scriptsize $1$};
\node at (1,2) {\scriptsize$\bullet$};\node at (1,1) {\scriptsize$\bullet$};
\node at (2,2) {\scriptsize$\bullet$};\node at (2,1) {\scriptsize$\bullet$};
\node at (3.5,1.5) {\footnotesize{${}+(n-4)$}};

\node at (6.8,1.5) {$=$};

\draw(10,2)--(10,1);\draw(9,2)--(10,2);\draw(9,2)--(9,1);
\node at (9,2) {\scriptsize$\bullet$};\node at (9,1) {\scriptsize$\bullet$};
\node at (10,2) {\scriptsize$\bullet$};\node at (10,1) {\scriptsize$\bullet$};
\node [above] at (9.1,2) {\scriptsize $a'$};
\node [above] at (10.1,2) {\scriptsize $b'$};
\node [below] at (9,1) {\scriptsize $2$};
\node [below] at (10.1,1) {\scriptsize $c'$};
\node at (11.5,1.5) {\footnotesize{${}+(n-4)$,}};
\end{tikzpicture}
\end{equation*}
which is impossible, as both $c$ and $c'$ are greater than 3.
\end{proof}

\section{Acyclicity}\label{aciclicidad}
This section is devoted to the proof of:
\begin{proposition}\label{resacy}
The matching $\mathcal{D}_{n,n-2}$ in Corollary~\ref{estesieselbueno} is acyclic. 
\end{proposition}

We have noted that $\mathcal{P}_{12}$ is an acyclic matching in $X_{12}$ so, by Propositions~\ref{descomposicion} and~\ref{x12iscomplex}, the proof of Proposition~\ref{resacy} will follow once we check acyclicity of $\mathcal{Q}$ in $R_{12}$. The acyclicity of $Q_2^3$ follows directly from item 2 in Proposition~\ref{lexi2}, whereas the acyclicity of $Q_1^2$ is proved below by an inductive argument based on Proposition~\ref{relativized} and the following preliminary considerations.

\begin{remark}\label{alrestringir}{\em
The rule $\sigma\mapsto\sigma+(1)$, where the added vertex has label $n$, sets a simplicial complex inclusion $D_{n-1,n-3}\hookrightarrow D_{n,n-2}$. In order to distinguish the referent complex, we will use a superindex ``$m$'' for objects defined within the context of $D_{m,m-2}$. For instance, by definition (alternatively, in view of Proposition~\ref{x12iscomplex}), the intersection of $D_{n-1,n-3}$ with the subcomplex $X_{12}^{n}$ of $D_{n,n-2}$ is the corresponding subcomplex $X_{12}^{n-1}$ of $D_{n-1,n-3}$. In particular the family $R_{12}^{n-1}$ is the intersection of $D_{n-1,n-3}$ and the family $R_{12}^n$. Likewise, $\left(\mathcal{P}_{12}\right)^{n-1}$ is the restriction to $X_{12}^{n-1}$ of $\left(\mathcal{P}_{12}\right)^n$, and from the explicit form of the $\mathcal{Q}_1^2$ matching (see~(\ref{qmatch1}) and~(\ref{qmatch2})), we see that $\left(\mathcal{Q}_1^2\right)^{n-1}$ is the restriction of $\left(\mathcal{Q}_1^2\right)^n$. Thus, in view of Proposition~\ref{relativized}, the goal of this section reduces to proving  Propositions~\ref{partialnocycles} and~\ref{inicioinductivo} below.
}\end{remark}

\begin{proposition}\label{partialnocycles}
For $n\geq5$, there are no $\left(\mathcal{Q}_1^2\right)^n$-cycles in $R^n_{12}-D_{n-1,n-3}$.
\end{proposition}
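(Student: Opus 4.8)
The plan is to analyze the structure of a hypothetical $\left(\mathcal{Q}_1^2\right)^n$-cycle living in $R^n_{12}-D_{n-1,n-3}$ and derive a contradiction by tracking how the top vertex $n$ is used. Recall that a $\mathcal{Q}_1^2$-cycle (in the sense of discrete Morse theory for the partial matching between dimensions $1$ and $2$) is a sequence
$$\sigma_0,\,\tau_0,\,\sigma_1,\,\tau_1,\,\ldots,\,\sigma_{r-1},\,\tau_{r-1},\,\sigma_r=\sigma_0$$
with $r\ge1$, where each $\sigma_i$ is a $1$-cell, each $\tau_i$ is a $2$-cell, $\tau_i$ is the $\mathcal{Q}_1^2$-partner of $\sigma_i$ (so $\tau_i=\sigma_i+e_i$ with $e_i$ the first edge of $M_{\sigma_i}$), and $\sigma_{i+1}$ is a codimension-one face of $\tau_i$ different from $\sigma_i$. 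The extra hypothesis is that every cell $\sigma_i$ and $\tau_i$ involves the vertex $n$ in a nontrivial way, i.e.\ lies outside $D_{n-1,n-3}$, which by the inclusion $\sigma\mapsto\sigma+(1)$ means that vertex $n$ is not isolated in any of these graphs.

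First I would record the shape constraints coming from Section~\ref{structuresection}: every $2$-cell $\tau_i\in R_{12}$ is (by Lemma~\ref{combinatorics2} together with Remark~\ref{not12}) a path on three edges $P_4+(n-4)$, since the triangle $K_3+(n-3)$ has no free $1$-face in $R_{12}$ --- a triangle's $1$-faces are $2$-edge graphs, which by Lemma~\ref{combinatorics01}.4 lie in $X_{12}$, not $R_{12}$ --- so a triangle cannot be the $\mathcal{Q}_1^2$-image of a $1$-cell in $R_{12}$ at all. Hence every $\tau_i$ is a path $P_4$ on four vertices, and the two faces $\sigma_i,\sigma_{i+1}$ that occur in the cycle are both obtained by deleting an edge of this path and still lying in $R_{12}$; deleting the middle edge of a $P_4$ gives $2K_2$ (form~(\ref{opt1})), deleting an end edge gives $P_3$ (form~(\ref{opt2})). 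Since $\tau_i\notin D_{n-1,n-3}$, vertex $n$ is one of the four path vertices. I would then combine this with the explicit matching rules~(\ref{qmatch1}) and~(\ref{qmatch2}): the partner edge $e_i=ij$ satisfies $i<j<n$ (last line of the proof of Proposition~\ref{lexi}), so $e_i$ never touches $n$. Therefore, going from $\sigma_i$ to $\tau_i=\sigma_i+e_i$ and then to $\sigma_{i+1}=\tau_i-(\text{some edge of }\tau_i)$, the edge set incident to $n$ can only \emph{decrease or stay the same} when passing $\sigma_i\rightsquigarrow\sigma_{i+1}$, unless $\sigma_{i+1}=\tau_i-e_i=\sigma_i$, which is disallowed in a cycle. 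More precisely I want a strictly monotone quantity.

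The key step is to pin down a strictly decreasing invariant along the cycle, which is impossible for a genuine cycle and yields the contradiction. Let $\delta_i$ be the degree of vertex $n$ in $\sigma_i$; since $\sigma_i\notin D_{n-1,n-3}$ we have $\delta_i\ge1$, and since $\sigma_i$ has only two edges, $\delta_i\in\{1,2\}$. As $e_i$ misses $n$, the degree of $n$ in $\tau_i$ equals $\delta_i$, and $\sigma_{i+1}$ is obtained from $\tau_i$ by deleting an edge $f_i\ne e_i$ of $\tau_i$; thus $\delta_{i+1}=\delta_i$ if $f_i$ misses $n$, and $\delta_{i+1}=\delta_i-1$ if $f_i$ is incident to $n$. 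Consequently $\delta_0\ge\delta_1\ge\cdots\ge\delta_r=\delta_0$ forces every step to have $f_i$ missing $n$, i.e.\ in \emph{every} step the deleted edge $f_i$ avoids vertex $n$. Now I would argue this is incompatible with the matching rules: among the two valid faces of the path $\tau_i$ that stay in $R_{12}$, consider which one is $\sigma_i$ (the preimage) versus $\sigma_{i+1}$. Using~(\ref{qmatch1})/(\ref{qmatch2}) the edge $e_i=ij$ removed to recover $\sigma_i$ from $\tau_i$ is a \emph{specific} one determined by lexicographic minimality, and I would check case by case (paralleling the five cases in the proof that $\mathcal{Q}$ is a pairing) that the \emph{other} admissible face $\sigma_{i+1}=\tau_i-f_i$ must delete an edge $f_i$ that \emph{does} touch vertex $n$ --- essentially because in a $P_4$ on vertices including $n$, the two ``$R_{12}$-admissible'' deletions cannot both avoid $n$ once one remembers that deleting the $e_i$-edge (which misses $n$) is already spoken for as the route back to $\sigma_i$, and the remaining admissible deletion in each configuration of~(\ref{opt1})/(\ref{opt2}) relative to the position of $n$ in the path removes an $n$-incident edge. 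This last verification is the main obstacle: it is a finite but slightly fiddly case analysis on where $n$ sits among the four vertices $a,b,c,d$ of the path $\tau_i$ and on which of~(\ref{qmatch1})/(\ref{qmatch2}) produced the matched pair, and one has to be careful that the lexicographic tie-breaking (which compares labels, and $n$ is the largest label) consistently pushes the ``return edge'' $e_i$ to avoid $n$ while forcing the ``continuation edge'' $f_i$ to hit $n$. Once that is established, the monotonicity contradiction $\delta_0\ge\cdots\ge\delta_0$ with at least one strict drop (at each $i$) collapses, so no such cycle exists, proving Proposition~\ref{partialnocycles}.

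Finally, a small remark on the edge cases: for $n=5$ the complex $D_{5,3}=D_{n-1,n-3}$ is zero-dimensional (it is the $0$-skeleton of the full simplex, by the discussion after~(\ref{filtration})), so $R^5_{12}-D_{4,2}$ is most of $R^5_{12}$ and the argument above applies verbatim; I would mention this so the reader sees the base hypothesis $n\ge5$ of the statement is exactly what makes the inclusion $D_{n-1,n-3}\hookrightarrow D_{n,n-2}$ meaningful, with Proposition~\ref{inicioinductivo} handling the genuine base of the induction separately.
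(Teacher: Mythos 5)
Your plan builds on a monotone quantity $\delta_i = \deg_n(\sigma_i)$ and correctly observes that the matching edge $e_i$ never touches $n$ (Proposition~\ref{lexi} guarantees $i<j<n$), so $\delta$ is non-increasing and must be constant along a hypothetical cycle. But the step you defer to ``a finite but slightly fiddly case analysis'' --- that the continuation edge $f_i$ removed from $\tau_i$ to reach $\sigma_{i+1}$ must be incident to $n$ --- is simply false, so the strict drop you need never materializes. Concretely, take $n=6$ and $\sigma_i=35|46$. One checks $35|46\in R^6_{12}-D_{5,3}$ (since $35|46|12$ is three disjoint edges, hence not in $D_{6,4}$, and vertex $6$ is not isolated), and the rule~(\ref{qmatch1}) matches it to $\tau_i=34|35|46$, a path $5$--$3$--$4$--$6$. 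The face $\sigma_{i+1}=34|46$ of $\tau_i$ is again in $R^6_{12}-D_{5,3}$ (since $34|46|12$ is $P_3+K_2$), yet the deleted edge $f_i=35$ misses $n=6$ entirely, leaving $\delta$ at $1$. Iterating once more, $34|46\nearrow 13|34|46\searrow 13|46$, produces a genuine two-step $\mathcal{Q}_1^2$-path in $R^6_{12}-D_{5,3}$ along which $\deg_n$ never drops. These are exactly the paper's case~(7) paths, and its case~(6) behaves the same way. Such paths do terminate, but for reasons your invariant cannot see: at the last step the unique remaining candidate $1$-face falls out of $R^n_{12}$. The paper's proof therefore proceeds by enumerating the seven possible shapes of the starting $1$-cell (according to where $n$ sits) and tracing each forced path to its dead end; the degree-of-$n$ bookkeeping cannot substitute for that enumeration.

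A secondary slip: your claim that a triangle $K_3+(n-3)$ is never a $\mathcal{Q}_1^2$-matched $2$-cell is correct, but the justification given (that a triangle's $1$-faces lie in $X_{12}$ ``by Lemma~\ref{combinatorics01}.4'') is not. That lemma only places $2$-edge graphs inside $D_{n,n-2}$; a $P_3$ on three vertices disjoint from $\{1,2\}$ does lie in $R_{12}$, since adding the edge $12$ yields a disconnected $3$-edge graph which by Lemma~\ref{combinatorics2} is not in $D_{n,n-2}$. The actual reason, recorded in the paper immediately before the proof of Proposition~\ref{lexi}, is that the lexicographically first eligible edge always completes a $P_4$, never a $K_3$.
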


\begin{proposition}\label{inicioinductivo}
$\left(\mathcal{Q}_1^2\right)^4$ is acyclic.
\end{proposition}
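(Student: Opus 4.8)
The plan is to prove this by direct inspection, since $D_{4,2}$ is tiny and completely listed in Example~\ref{d42}. First I would identify the $1$-cells of $R_{12}^4=D_{4,2}-X_{12}^4$. By Remark~\ref{not12} such a cell $\sigma$ avoids the edge $12$, and by Lemma~\ref{combinatorics01}.4 every two-edge graph lies in $D_{4,2}$, so $\sigma\in R_{12}^4$ precisely when $\sigma+12\notin D_{4,2}$. Now $\sigma+12$ is a three-edge graph on the four vertices $\overline 4$, and on four vertices the only three-edge graph outside $D_{4,2}$ is the star $K_{1,3}$: the triangle $K_3$ and the path $P_4$ both lie in $D_{4,2}$, whereas $3K_2$ and $P_3\sqcup K_2$ need more than four vertices. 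Thus $\sigma+12$ must be a star centred at $1$ or at $2$, which forces $\sigma$ to be the path $3-1-4$ or the path $3-2-4$; that is, $R_{12}^4$ has exactly two $1$-cells, $13|14$ and $23|24$. (A parallel short enumeration pins down the six $2$-cells and the single $3$-cell of $R_{12}^4$, but only the two $1$-cells and their matches are needed for the present statement.)

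Next I would read $\left(\mathcal{Q}_1^2\right)^4$ off Definition~\ref{q12}, equivalently off the picture in~(\ref{qmatch2}). The cell $13|14$ is of type~(\ref{opt2}) with apex $1$, so the lexicographic rule adds the edge $23$ and pairs $13|14$ with $13|14|23$; the cell $23|24$ is of type~(\ref{opt2}) with apex $2$, so the rule adds the edge $13$ and pairs $23|24$ with $13|23|24$. Hence $\left(\mathcal{Q}_1^2\right)^4$ consists of exactly these two matched pairs.

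Finally, acyclicity is then immediate. There are only two $\left(\mathcal{Q}_1^2\right)^4$-matched $1$-cells, and neither is a face of the other one's partner, since $24\notin 13|14|23$ and $14\notin 13|23|24$. Therefore, in any alternating $V$-path the $1$-cell that follows a matched $2$-cell must coincide with the $1$-cell that preceded it, so there is no nontrivial closed $V$-path and $\left(\mathcal{Q}_1^2\right)^4$ is acyclic. The only step that requires care is the combinatorial bookkeeping of the first paragraph — getting the list of $R_{12}^4$-cells and the resulting matching exactly right — rather than any conceptual difficulty; indeed this is precisely why $n=4$ serves as the (essentially trivial) base case of the induction fed by Proposition~\ref{partialnocycles}.
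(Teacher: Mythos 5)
Your proposal is correct and takes essentially the same approach as the paper: direct inspection of the small complex $D_{4,2}$, identifying the two $1$-cells $13|14$ and $23|24$ of $R_{12}^4$, computing their matches $13|14|23$ and $13|23|24$ under $\left(\mathcal{Q}_1^2\right)^4$, and observing that no alternating path can close up. The only stylistic difference is that the paper tabulates all nine cells of $R_{12}^4$ and exhibits the full modified Hasse diagram, whereas you trim the check to just the two matched pairs and the faces of their top cells, which (as you note) is all that acyclicity requires.
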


\begin{proof}[Proof of Proposition~\ref{partialnocycles}]
Recall from Section~\ref{secndmt} the notation~(\ref{path}) for directed paths in the modified Hasse diagram coming from a matching. We make a thorough analysis of the possibilities for a directed $\left(\mathcal{Q}_1^2\right)^n$-path in $R_{12}^n-D_{n-1,n-3}$, showing that none of those paths can be a cycle and, as a byproduct, that all of them have in fact length at most three.

In detail, let $\alpha$ be a $\left(\mathcal{Q}_1^2\right)^n$-path in $R_{12}^n-D_{n-1,n-3}$ starting at node $\tau_0$. Since $\tau_0$ lies outside $D_{n-1,n-3}$, the vertex $n$ cannot be an isolated vertex of $\tau_0$. Therefore, the only possibilities for $\tau_0$ are shown below where, for simplicity, we omit the part ``${}+(m)$'' in the graph notation and, as discussed in the proof of Proposition~\ref{lexi}, we are assuming without loss of generality $3\le a<c$ ($a<b$ and $a<c<d$) in the first four (last three) instances:
\begin{equation*}
\begin{tikzpicture}[x=.6cm,y=.6cm]
\node at (0,1.5) {(\emph{i})};
\draw(2,2)--(2,1);\draw(1,2)--(2,2);
\node [above] at (2.45,2) {\scriptsize $b=1$};\node [above] at (1,2) {\scriptsize $a$};
\node [below] at (2.43,1) {\scriptsize $c=n$};
\node at (1,2) {\scriptsize$\bullet$};
\node at (2,2) {\scriptsize$\bullet$};\node at (2,1) {\scriptsize$\bullet$};

\node at (5,1.5) {(\emph{ii})};
\draw(7,2)--(7,1);\draw(6,2)--(7,2);
\node [above] at (7.46,2) {\scriptsize $b=2$};\node [above] at (6,2) {\scriptsize $a$};
\node [below] at (7.44,1) {\scriptsize $c=n$};
\node at (6,2) {\scriptsize$\bullet$};
\node at (7,2) {\scriptsize$\bullet$};\node at (7,1) {\scriptsize$\bullet$};

\node at (10,1.5) {(\emph{iii})};
\draw(12,2)--(12,1);\draw(11,2)--(12,2);
\node [above] at (12.46,2) {\scriptsize $b\ge3$};\node [above] at (11,2) {\scriptsize $a$};
\node [below] at (12.44,1) {\scriptsize $c=n$};
\node at (11,2) {\scriptsize$\bullet$};
\node at (12,2) {\scriptsize$\bullet$};\node at (12,1) {\scriptsize$\bullet$};

\node at (15,1.5) {(\emph{iv})};
\draw(17,2)--(17,1);\draw(16,2)--(17,2);
\node [above] at (17.47,2) {\scriptsize $b=n$};\node [above] at (16,2) {\scriptsize $a$};
\node [below] at (17,1) {\scriptsize $c$};
\node at (16,2) {\scriptsize$\bullet$};
\node at (17,2) {\scriptsize$\bullet$};\node at (17,1) {\scriptsize$\bullet$};

\node at (0,-1.5) {(\emph{v})};
\draw(1,-1)--(2,-1);\draw(1,-2)--(2,-2);
\node [above] at (2.47,-1) {\scriptsize $b=n$};\node [above] at (.5,-1) {\scriptsize $1=a$};
\node [below] at (2,-2) {\scriptsize $d$};\node [below] at (1,-2) {\scriptsize $c$};
\node at (1,-1) {\scriptsize$\bullet$};\node at (1,-2) {\scriptsize$\bullet$};
\node at (2,-1) {\scriptsize$\bullet$};\node at (2,-2) {\scriptsize$\bullet$};

\node at (5,-1.5) {(\emph{vi})};
\draw(6,-1)--(7,-1);\draw(6,-2)--(7,-2);
\node [above] at (7.47,-1) {\scriptsize $b=n$};\node [above] at (5.5,-1) {\scriptsize $1<a$};
\node [below] at (7,-2) {\scriptsize $d$};\node [below] at (6,-2) {\scriptsize $c$};
\node at (6,-1) {\scriptsize$\bullet$};\node at (6,-2) {\scriptsize$\bullet$};
\node at (7,-1) {\scriptsize$\bullet$};\node at (7,-2) {\scriptsize$\bullet$};

\node at (10,-1.5) {(\emph{vii})};
\draw(11,-1)--(12,-1);\draw(12,-2)--(11,-2);
\node [above] at (12,-1) {\scriptsize $b$};\node [above] at (11,-1) {\scriptsize $a$};
\node [below] at (12.47,-2) {\scriptsize $d=n$};\node [below] at (11,-2) {\scriptsize $c$};
\node at (11,-1) {\scriptsize$\bullet$};\node at (11,-2) {\scriptsize$\bullet$};
\node at (12,-1) {\scriptsize$\bullet$};\node at (12,-2) {\scriptsize$\bullet$};
\end{tikzpicture}
\end{equation*}

We consider in full detail the case\footnote{This is a simple but representative instance that will help the reader follow the argument. For the rest of the instances we will simply describe the forced path, from which the reader will have no trouble seeing the corresponding proof details.} where $\tau_0$ has the form in~(\emph{i}): By~(\ref{qmatch2}), $\alpha$ starts with the matching edge
$$\begin{tikzpicture}[x=.6cm,y=.6cm]
\draw(2,2)--(2,1);\draw(1,2)--(2,2);
\node [above] at (2,2) {\scriptsize $1$};\node [above] at (1,2) {\scriptsize $a$};
\node [below] at (2,1) {\scriptsize $n$};
\node at (1,2) {\scriptsize$\bullet$};
\node at (2,2) {\scriptsize$\bullet$};\node at (2,1) {\scriptsize$\bullet$};
\node at (3.5,1.5) {$\nearrow$};

\draw(6,2)--(6,1);\draw(5,2)--(6,2);\draw(5,2)--(5,1);
\node [above] at (6,2) {\scriptsize $1$};\node [above] at (5,2) {\scriptsize $a$};
\node [below] at (6,1) {\scriptsize $n$};\node [below] at (5,1) {\scriptsize $2$};
\node at (5,2) {\scriptsize$\bullet$};\node at (5,1) {\scriptsize$\bullet$};
\node at (6,2) {\scriptsize$\bullet$};\node at (6,1) {\scriptsize$\bullet$};
\end{tikzpicture}$$
After this initial ``step'', there are two options for $\alpha$: either taking the face that deletes the edge $1a$ or the face that deletes the edge $1n$. However, in the latter instance, we arrive at 
$$\begin{tikzpicture}[x=.6cm,y=.6cm]
\draw(5,2)--(6,2);\draw(5,2)--(5,1);
\node [above] at (6,2) {\scriptsize $1$};\node [above] at (5,2) {\scriptsize $a$};
\node [below] at (6,1) {\scriptsize $n$};\node [below] at (5,1) {\scriptsize $2$};
\node at (5,2) {\scriptsize$\bullet$};\node at (5,1) {\scriptsize$\bullet$};
\node at (6,2) {\scriptsize$\bullet$};\node at (6,1) {\scriptsize$\bullet$};
\end{tikzpicture}$$
which is a simplex of $D_{n-1,n-3}$ (as $n$ is isolated). This instance is forbidden since $\alpha$ is a path in $R^n_{12}-D_{n-1,n-3}$. Therefore $\alpha$ is forced to continue as
$$\begin{tikzpicture}[x=.6cm,y=.6cm]
\draw(2,2)--(2,1);\draw(1,2)--(2,2);
\node [above] at (2,2) {\scriptsize $1$};\node [above] at (1,2) {\scriptsize $a$};
\node [below] at (2,1) {\scriptsize $n$};
\node at (1,2) {\scriptsize$\bullet$};
\node at (2,2) {\scriptsize$\bullet$};\node at (2,1) {\scriptsize$\bullet$};

\node at (3.5,1.5) {$\nearrow$};

\draw(6,2)--(6,1);\draw(5,2)--(6,2);\draw(5,2)--(5,1);
\node [above] at (6,2) {\scriptsize $1$};\node [above] at (5,2) {\scriptsize $a$};
\node [below] at (6,1) {\scriptsize $n$};\node [below] at (5,1) {\scriptsize $2$};
\node at (5,2) {\scriptsize$\bullet$};\node at (5,1) {\scriptsize$\bullet$};
\node at (6,2) {\scriptsize$\bullet$};\node at (6,1) {\scriptsize$\bullet$};

\node at (7.5,1.5) {$\searrow$};

\draw(10,2)--(10,1);\draw(9,2)--(9,1);
\node [above] at (10,2) {\scriptsize $1$};\node [above] at (9,2) {\scriptsize $a$};
\node [below] at (10,1) {\scriptsize $n$};\node [below] at (9,1) {\scriptsize $2$};
\node at (9,2) {\scriptsize$\bullet$};\node at (9,1) {\scriptsize$\bullet$};
\node at (10,2) {\scriptsize$\bullet$};\node at (10,1) {\scriptsize$\bullet$};
\end{tikzpicture}$$
According to~(\ref{qmatch1}), the next matching edge in $\alpha$ is 
$$\begin{tikzpicture}[x=.6cm,y=.6cm]
\draw(6,2)--(6,1);\draw(5,2)--(5,1);
\node [above] at (6,2) {\scriptsize $1$};\node [above] at (5,2) {\scriptsize $a$};
\node [below] at (6,1) {\scriptsize $n$};\node [below] at (5,1) {\scriptsize $2$};
\node at (5,2) {\scriptsize$\bullet$};\node at (5,1) {\scriptsize$\bullet$};
\node at (6,2) {\scriptsize$\bullet$};\node at (6,1) {\scriptsize$\bullet$};

\node at (7.5,1.5) {$\nearrow$};

\draw(10,2)--(10,1);\draw(9,2)--(9,1);\draw(9,1)--(10,2);
\node [above] at (10,2) {\scriptsize $1$};\node [above] at (9,2) {\scriptsize $a$};
\node [below] at (10,1) {\scriptsize $n$};\node [below] at (9,1) {\scriptsize $2$};
\node at (9,2) {\scriptsize$\bullet$};\node at (9,1) {\scriptsize$\bullet$};
\node at (10,2) {\scriptsize$\bullet$};\node at (10,1) {\scriptsize$\bullet$};
\end{tikzpicture}$$
However, the latter 2-dimensional simplex lies outside $R_{12}^n$ due to the $12$-edge (recall Remark~\ref{not12}). Putting everything together, we have shown that, in case~(\emph{i}) above, the directed path $\alpha$ is forced to be
\begin{equation}\label{enresumen}
\begin{gathered}
\begin{tikzpicture}[x=.6cm,y=.6cm]
\draw(2,2)--(2,1);\draw(1,2)--(2,2);
\node [above] at (2,2) {\scriptsize $1$};\node [above] at (1,2) {\scriptsize $a$};
\node [below] at (2,1) {\scriptsize $n$};
\node at (1,2) {\scriptsize$\bullet$};
\node at (2,2) {\scriptsize$\bullet$};\node at (2,1) {\scriptsize$\bullet$};

\node at (3.5,1.5) {$\nearrow$};

\draw(6,2)--(6,1);\draw(5,2)--(6,2);\draw(5,2)--(5,1);
\node [above] at (6,2) {\scriptsize $1$};\node [above] at (5,2) {\scriptsize $a$};
\node [below] at (6,1) {\scriptsize $n$};\node [below] at (5,1) {\scriptsize $2$};
\node at (5,2) {\scriptsize$\bullet$};\node at (5,1) {\scriptsize$\bullet$};
\node at (6,2) {\scriptsize$\bullet$};\node at (6,1) {\scriptsize$\bullet$};

\node at (7.5,1.5) {$\searrow$};

\draw(10,2)--(10,1);\draw(9,2)--(9,1);\draw[dotted](9,1)--(10,2);
\node [above] at (10,2) {\scriptsize $1$};\node [above] at (9,2) {\scriptsize $a$};
\node [below] at (10,1) {\scriptsize $n$};\node [below] at (9,1) {\scriptsize $2$};
\node at (9,2) {\scriptsize$\bullet$};\node at (9,1) {\scriptsize$\bullet$};
\node at (10,2) {\scriptsize$\bullet$};\node at (10,1) {\scriptsize$\bullet$};
\end{tikzpicture}
\end{gathered}
\end{equation}
where the dotted edge indicates the reason why $\alpha$ stops (before leaving $R^n_{12}$). As promised, this is a non-cycle of length at most 3 (actually length 1).

The above analysis goes through in all other six cases. Namely, in each case the path $\alpha$ in $R^n_{12}-D_{n-1,n-3}$ is forced to be the one indicated below. The only exception is reflected by the two items with label ``(\emph{iv})'' below, due to the two corresponding options for the path $\alpha$. As above, all of the paths stop at the indicated node just before leaving $R_{12}^n$ due to the matching edge suggested by the dotted $12$-edge (just as in~(\ref{enresumen}) above). As in the detailed explanation above, the reader should keep in mind that, since $\alpha$ is a path in $R_{12}^n-D_{n-1,n-3}$, the vertex with label $n$ \emph{must} be part of an edge at all nodes of~$\alpha$ ---thus forcing the indicated behavior of the path. Also worth remarking is the fact that equality symbol in item (\emph{vi}) below is meant to highlight that the condition $2<a$ is forced, for otherwise the node would lie outside $R_{12}^n$. A similar situation holds in item (\emph{vii}) below.

\medskip\noindent
$\begin{tikzpicture}[x=.6cm,y=.6cm]
\node at (-.5,1.5) {(\emph{ii})};
\draw(2,2)--(2,1);\draw(1,2)--(2,2);
\node [above] at (2,2) {\scriptsize $2$};\node [above] at (1,2) {\scriptsize $a$};
\node [below] at (2,1) {\scriptsize $n$};
\node at (1,2) {\scriptsize$\bullet$};
\node at (2,2) {\scriptsize$\bullet$};\node at (2,1) {\scriptsize$\bullet$};

\node at (3.5,1.5) {$\nearrow$};

\draw(6,2)--(6,1);\draw(5,2)--(6,2);\draw(5,2)--(5,1);
\node [above] at (6,2) {\scriptsize $2$};\node [above] at (5,2) {\scriptsize $a$};
\node [below] at (6,1) {\scriptsize $n$};\node [below] at (5,1) {\scriptsize $1$};
\node at (5,2) {\scriptsize$\bullet$};\node at (5,1) {\scriptsize$\bullet$};
\node at (6,2) {\scriptsize$\bullet$};\node at (6,1) {\scriptsize$\bullet$};

\node at (7.5,1.5) {$\searrow$};

\draw(10,2)--(10,1);\draw(9,2)--(9,1);\draw[dotted](9,1)--(10,2);
\node [above] at (10,2) {\scriptsize $2$};\node [above] at (9,2) {\scriptsize $a$};
\node [below] at (10,1) {\scriptsize $n$};\node [below] at (9,1) {\scriptsize $1$};
\node at (9,2) {\scriptsize$\bullet$};\node at (9,1) {\scriptsize$\bullet$};
\node at (10,2) {\scriptsize$\bullet$};\node at (10,1) {\scriptsize$\bullet$};
\end{tikzpicture}$

\smallskip\noindent 
$\begin{tikzpicture}[x=.6cm,y=.6cm]
\node at (-.5,1.5) {(\emph{iii})};
\draw(2,2)--(2,1);\draw(1,2)--(2,2);
\node [above] at (2.47,2) {\scriptsize $b\ge3$};\node [above] at (1,2) {\scriptsize $a$};
\node [below] at (2,1) {\scriptsize $n$};
\node at (1,2) {\scriptsize$\bullet$};
\node at (2,2) {\scriptsize$\bullet$};\node at (2,1) {\scriptsize$\bullet$};

\node at (3.5,1.5) {$\nearrow$};

\draw(6,2)--(6,1);\draw(5,2)--(6,2);\draw(5,2)--(5,1);
\node [above] at (6.47,2) {\scriptsize $b\ge3$};\node [above] at (5,2) {\scriptsize $a$};
\node [below] at (6,1) {\scriptsize $n$};\node [below] at (5,1) {\scriptsize $1$};
\node at (5,2) {\scriptsize$\bullet$};\node at (5,1) {\scriptsize$\bullet$};
\node at (6,2) {\scriptsize$\bullet$};\node at (6,1) {\scriptsize$\bullet$};

\node at (7.5,1.5) {$\searrow$};

\draw(10,2)--(10,1);\draw(9,2)--(9,1);
\node [above] at (10.47,2) {\scriptsize $b\geq3$};\node [above] at (9,2) {\scriptsize $a$};
\node [below] at (10,1) {\scriptsize $n$};\node [below] at (9,1) {\scriptsize $1$};
\node at (9,2) {\scriptsize$\bullet$};\node at (9,1) {\scriptsize$\bullet$};
\node at (10,2) {\scriptsize$\bullet$};\node at (10,1) {\scriptsize$\bullet$};

\node at (11.5,1.5) {$\nearrow$};

\draw(14,2)--(14,1);\draw(13,2)--(13,1);\draw(13,1)--(14,2);
\node [above] at (14.47,2) {\scriptsize $b\geq3$};\node [above] at (13,2) {\scriptsize $a$};
\node [below] at (14,1) {\scriptsize $n$};\node [below] at (13,1) {\scriptsize $1$};
\node at (13,2) {\scriptsize$\bullet$};\node at (13,1) {\scriptsize$\bullet$};
\node at (14,2) {\scriptsize$\bullet$};\node at (14,1) {\scriptsize$\bullet$};

\node at (15.5,1.5) {$\searrow$};

\draw(18,2)--(18,1);\draw[dotted](17,2)--(17,1);\draw(17,1)--(18,2);
\node [above] at (18.47,2) {\scriptsize $b\geq3$};\node [above] at (17,2) {\scriptsize $2$};
\node [below] at (18,1) {\scriptsize $n$};\node [below] at (17,1) {\scriptsize $1$};
\node at (17,2) {\scriptsize$\circ$};\node at (17,1) {\scriptsize$\bullet$};
\node at (18,2) {\scriptsize$\bullet$};\node at (18,1) {\scriptsize$\bullet$};
\end{tikzpicture}$

\smallskip\noindent 
$\begin{tikzpicture}[x=.6cm,y=.6cm]
\node at (-.5,1.5) {(\emph{iv})};
\draw(2,2)--(2,1);\draw(1,2)--(2,2);
\node [above] at (2,2) {\scriptsize $n$};\node [above] at (1,2) {\scriptsize $a$};
\node [below] at (2,1) {\scriptsize $c$};
\node at (1,2) {\scriptsize$\bullet$};
\node at (2,2) {\scriptsize$\bullet$};\node at (2,1) {\scriptsize$\bullet$};

\node at (3.5,1.5) {$\nearrow$};

\draw(6,2)--(6,1);\draw(5,2)--(6,2);\draw(5,2)--(5,1);
\node [above] at (6,2) {\scriptsize $n$};\node [above] at (5,2) {\scriptsize $a$};
\node [below] at (6,1) {\scriptsize $c$};\node [below] at (5,1) {\scriptsize $1$};
\node at (5,2) {\scriptsize$\bullet$};\node at (5,1) {\scriptsize$\bullet$};
\node at (6,2) {\scriptsize$\bullet$};\node at (6,1) {\scriptsize$\bullet$};

\node at (7.5,1.5) {$\searrow$};

\draw(9,2)--(10,2);\draw(9,2)--(9,1);\draw[dotted](9,1)--(10,1);
\node [above] at (10,2) {\scriptsize $n$};\node [above] at (9,2) {\scriptsize $a$};
\node [below] at (10,1) {\scriptsize $2$};\node [below] at (9,1) {\scriptsize $1$};
\node at (9,2) {\scriptsize$\bullet$};\node at (9,1) {\scriptsize$\bullet$};
\node at (10,2) {\scriptsize$\bullet$};\node at (10,1) {\scriptsize$\circ$};
\end{tikzpicture}$

\smallskip\noindent 
$\begin{tikzpicture}[x=.6cm,y=.6cm]
\node at (-.5,1.5) {(\emph{iv})};
\draw(2,2)--(2,1);\draw(1,2)--(2,2);
\node [above] at (2,2) {\scriptsize $n$};\node [above] at (1,2) {\scriptsize $a$};
\node [below] at (2,1) {\scriptsize $c$};
\node at (1,2) {\scriptsize$\bullet$};
\node at (2,2) {\scriptsize$\bullet$};\node at (2,1) {\scriptsize$\bullet$};

\node at (3.5,1.5) {$\nearrow$};

\draw(6,2)--(6,1);\draw(5,2)--(6,2);\draw(5,2)--(5,1);
\node [above] at (6,2) {\scriptsize $n$};\node [above] at (5,2) {\scriptsize $a$};
\node [below] at (6,1) {\scriptsize $c$};\node [below] at (5,1) {\scriptsize $1$};
\node at (5,2) {\scriptsize$\bullet$};\node at (5,1) {\scriptsize$\bullet$};
\node at (6,2) {\scriptsize$\bullet$};\node at (6,1) {\scriptsize$\bullet$};

\node at (7.5,1.5) {$\searrow$};

\draw(10,1)--(10,2);\draw(9,2)--(9,1);
\node [above] at (10,2) {\scriptsize $n$};\node [above] at (9,2) {\scriptsize $a$};
\node [below] at (10,1) {\scriptsize $c$};\node [below] at (9,1) {\scriptsize $1$};
\node at (9,2) {\scriptsize$\bullet$};\node at (9,1) {\scriptsize$\bullet$};
\node at (10,2) {\scriptsize$\bullet$};\node at (10,1) {\scriptsize$\bullet$};

\node at (11.5,1.5) {$\nearrow$};

\draw(14,1)--(14,2);\draw(13,2)--(13,1);\draw(13,1)--(14,1);
\node [above] at (14,2) {\scriptsize $n$};\node [above] at (13,2) {\scriptsize $a$};
\node [below] at (14,1) {\scriptsize $c$};\node [below] at (13,1) {\scriptsize $1$};
\node at (13,2) {\scriptsize$\bullet$};\node at (13,1) {\scriptsize$\bullet$};
\node at (14,2) {\scriptsize$\bullet$};\node at (14,1) {\scriptsize$\bullet$};

\node at (15.5,1.5) {$\searrow$};

\draw(18,1)--(18,2);\draw(17,1)--(18,1);\draw[dotted](17,1)--(17,2);
\node [above] at (18,2) {\scriptsize $n$};\node [above] at (17,2) {\scriptsize $2$};
\node [below] at (18,1) {\scriptsize $c$};\node [below] at (17,1) {\scriptsize $1$};
\node at (17,2) {\scriptsize$\circ$};\node at (17,1) {\scriptsize$\bullet$};
\node at (18,2) {\scriptsize$\bullet$};\node at (18,1) {\scriptsize$\bullet$};
\end{tikzpicture}$

\smallskip\noindent 
$\begin{tikzpicture}[x=.6cm,y=.6cm]
\node at (-.5,1.5) {(\emph{v})};
\draw(1,1)--(2,1);\draw(1,2)--(2,2);
\node [above] at (2,2) {\scriptsize $n$};\node [above] at (1,2) {\scriptsize $1$};
\node [below] at (2,1) {\scriptsize $d$};\node [below] at (1,1) {\scriptsize $c$};
\node at (1,2) {\scriptsize$\bullet$};\node at (1,1) {\scriptsize$\bullet$};
\node at (2,2) {\scriptsize$\bullet$};\node at (2,1) {\scriptsize$\bullet$};

\node at (3.5,1.5) {$\nearrow$};

\draw(5,1)--(6,1);\draw(5,2)--(6,2);\draw(5,1)--(5,2);
\node [above] at (6,2) {\scriptsize $n$};\node [above] at (5,2) {\scriptsize $1$};
\node [below] at (6,1) {\scriptsize $d$};\node [below] at (5,1) {\scriptsize $c$};
\node at (5,2) {\scriptsize$\bullet$};\node at (5,1) {\scriptsize$\bullet$};
\node at (6,2) {\scriptsize$\bullet$};\node at (6,1) {\scriptsize$\bullet$};

\node at (7.5,1.5) {$\searrow$};

\draw(9,2)--(10,2);\draw(9,2)--(9,1);
\node [above] at (10,2) {\scriptsize $n$};\node [above] at (9,2) {\scriptsize $1$};
\node [below] at (9,1) {\scriptsize $c$};
\node at (9,2) {\scriptsize$\bullet$};\node at (9,1) {\scriptsize$\bullet$};
\node at (10,2) {\scriptsize$\bullet$};

\node at (11.5,1.5) {$\nearrow$};

\draw(13,2)--(14,2);\draw(13,2)--(13,1);\draw(13,1)--(14,1);
\node [above] at (14,2) {\scriptsize $n$};\node [above] at (13,2) {\scriptsize $1$};
\node [below] at (14,1) {\scriptsize $2$};\node [below] at (13,1) {\scriptsize $c$};
\node at (13,2) {\scriptsize$\bullet$};\node at (13,1) {\scriptsize$\bullet$};
\node at (14,2) {\scriptsize$\bullet$};\node at (14,1) {\scriptsize$\bullet$};

\node at (15.5,1.5) {$\searrow$};

\draw(17,2)--(18,2);\draw(17,1)--(18,1);\draw[dotted](18,1)--(17,2);
\node [above] at (18,2) {\scriptsize $n$};\node [above] at (17,2) {\scriptsize $1$};
\node [below] at (18,1) {\scriptsize $2$};\node [below] at (17,1) {\scriptsize $c$};
\node at (17,2) {\scriptsize$\bullet$};\node at (17,1) {\scriptsize$\bullet$};
\node at (18,2) {\scriptsize$\bullet$};\node at (18,1) {\scriptsize$\bullet$};
\end{tikzpicture}$

\smallskip\noindent 
$\begin{tikzpicture}[x=.6cm,y=.6cm]
\node at (-.5,1.5) {(\emph{vi})};
\draw(1,1)--(2,1);\draw(1,2)--(2,2);
\node [above] at (2,2) {\scriptsize $n$};\node [above] at (.6,2) {\scriptsize $1<a$};
\node [below] at (2,1) {\scriptsize $d$};\node [below] at (1,1) {\scriptsize $c$};
\node at (1,2) {\scriptsize$\bullet$};\node at (1,1) {\scriptsize$\bullet$};
\node at (2,2) {\scriptsize$\bullet$};\node at (2,1) {\scriptsize$\bullet$};

\node at (3.5,1.5) {$\nearrow$};

\draw(5,1)--(6,1);\draw(5,2)--(6,2);\draw(5,2)--(5,1);
\node [above] at (6,2) {\scriptsize $n$};\node [above] at (4.6,2) {\scriptsize $1<a$};
\node [below] at (6,1) {\scriptsize $d$};\node [below] at (5,1) {\scriptsize $c$};
\node at (5,2) {\scriptsize$\bullet$};\node at (5,1) {\scriptsize$\bullet$};
\node at (6,2) {\scriptsize$\bullet$};\node at (6,1) {\scriptsize$\bullet$};

\node at (7.5,1.5) {$\searrow$};

\draw(9,2)--(10,2);\draw(9,2)--(9,1);
\node [above] at (10,2) {\scriptsize $n$};\node [above] at (8.6,2) {\scriptsize $1<a$};
\node [below] at (9,1) {\scriptsize $c$};
\node at (9,2) {\scriptsize$\bullet$};\node at (9,1) {\scriptsize$\bullet$};
\node at (10,2) {\scriptsize$\bullet$};

\node at (11.5,1.5) {$\nearrow$};

\draw(13,2)--(14,2);\draw(13,2)--(13,1);\draw(13,1)--(14,1);
\node [above] at (14,2) {\scriptsize $n$};\node [above] at (12.6,2) {\scriptsize $1<a$};
\node [below] at (14,1) {\scriptsize $1$};\node [below] at (13,1) {\scriptsize $c$};
\node at (13,2) {\scriptsize$\bullet$};\node at (13,1) {\scriptsize$\bullet$};
\node at (14,2) {\scriptsize$\bullet$};\node at (14,1) {\scriptsize$\bullet$};

\node at (15.5,1.5) {$\searrow$};

\draw(17,2)--(18,2);\draw(17,1)--(18,1);
\node [above] at (18,2) {\scriptsize $n$};\node [above] at (16.6,2) {\scriptsize $1<a$};
\node [below] at (18,1) {\scriptsize $1$};\node [below] at (17,1) {\scriptsize $c$};
\node at (17,2) {\scriptsize$\bullet$};\node at (17,1) {\scriptsize$\bullet$};
\node at (18,2) {\scriptsize$\bullet$};\node at (18,1) {\scriptsize$\bullet$};

\node at (19.5,1.5) {$=$};

\draw(21,2)--(22,2);\draw(21,1)--(22,1);
\node [above] at (22,2) {\scriptsize $n$};\node [above] at (20.6,2) {\scriptsize $2<a$};
\node [below] at (22,1) {\scriptsize $1$};\node [below] at (21,1) {\scriptsize $c$};
\node at (21,2) {\scriptsize$\bullet$};\node at (21,1) {\scriptsize$\bullet$};
\node at (22,2) {\scriptsize$\bullet$};\node at (22,1) {\scriptsize$\bullet$};
\end{tikzpicture}$

\smallskip\noindent 
$\begin{tikzpicture}[x=.6cm,y=.6cm]
\node at (15,1) {\rule{0mm}{2mm}};
\node at (19.5,1.5) {$\nearrow$};

\draw(21,2)--(22,2);\draw(21,1)--(22,1);\draw(22,1)--(21,2);
\node [above] at (22,2) {\scriptsize $n$};\node [above] at (20.6,2) {\scriptsize $2<a$};
\node [below] at (22,1) {\scriptsize $1$};\node [below] at (21,1) {\scriptsize $c$};
\node at (21,2) {\scriptsize$\bullet$};\node at (21,1) {\scriptsize$\bullet$};
\node at (22,2) {\scriptsize$\bullet$};\node at (22,1) {\scriptsize$\bullet$};

\node at (23.5,1.5) {$\searrow$};

\draw(25,2)--(26,2);\draw(26,1)--(25,2);\draw[dotted](25,1)--(26,1);
\node [above] at (26,2) {\scriptsize $n$};\node [above] at (24.6,2) {\scriptsize $2<a$};
\node [below] at (26,1) {\scriptsize $1$};\node [below] at (25,1) {\scriptsize $2$};
\node at (26,2) {\scriptsize$\bullet$};\node at (25,1) {\scriptsize$\circ$};
\node at (25,2) {\scriptsize$\bullet$};\node at (26,1) {\scriptsize$\bullet$};
\end{tikzpicture}$

\smallskip\noindent 
$\begin{tikzpicture}[x=.6cm,y=.6cm]
\node at (-.5,1.5) {(\emph{vii})};
\draw(1,1)--(2,1);\draw(1,2)--(2,2);
\node [above] at (2,2) {\scriptsize $b$};\node [above] at (1,2) {\scriptsize $a$};
\node [below] at (2,1) {\scriptsize $n$};\node [below] at (1,1) {\scriptsize $c$};
\node at (1,2) {\scriptsize$\bullet$};\node at (1,1) {\scriptsize$\bullet$};
\node at (2,2) {\scriptsize$\bullet$};\node at (2,1) {\scriptsize$\bullet$};

\node at (3.5,1.5) {$\nearrow$};

\draw(5,2)--(5,1);\draw(5,2)--(6,2);\draw(6,1)--(5,1);
\node [above] at (6,2) {\scriptsize $b$};\node [above] at (5,2) {\scriptsize $a$};
\node [below] at (6,1) {\scriptsize $n$};\node [below] at (5,1) {\scriptsize $c$};
\node at (5,2) {\scriptsize$\bullet$};\node at (5,1) {\scriptsize$\bullet$};
\node at (6,2) {\scriptsize$\bullet$};\node at (6,1) {\scriptsize$\bullet$};

\node at (7.5,1.5) {$\searrow$};

\draw(10,1)--(9,1);\draw(9,2)--(9,1);
\node [above] at (9,2) {\scriptsize $a$};
\node [below] at (10,1) {\scriptsize $n$};\node [below] at (9,1) {\scriptsize $c$};
\node at (9,2) {\scriptsize$\bullet$};\node at (9,1) {\scriptsize$\bullet$};
\node at (10,1) {\scriptsize$\bullet$};

\node at (11.5,1.5) {$=$};

\draw(14,1)--(13,1);\draw(13,2)--(13,1);
\node [above] at (12.6,2) {\scriptsize $2<a$};
\node [below] at (14,1) {\scriptsize $n$};\node [below] at (13,1) {\scriptsize $c$};
\node at (13,2) {\scriptsize$\bullet$};\node at (13,1) {\scriptsize$\bullet$};
\node at (14,1) {\scriptsize$\bullet$};

\node at (15.5,1.5) {$\nearrow$};

\draw(17,2)--(18,2);\draw(17,1)--(18,1);\draw(17,1)--(17,2);
\node [above] at (18,2) {\scriptsize $1$};\node [above] at (16.6,2) {\scriptsize $2<a$};
\node [below] at (18,1) {\scriptsize $n$};\node [below] at (17,1) {\scriptsize $c$};
\node at (17,2) {\scriptsize$\bullet$};\node at (17,1) {\scriptsize$\bullet$};
\node at (18,2) {\scriptsize$\bullet$};\node at (18,1) {\scriptsize$\bullet$};

\node at (19.5,1.5) {$\searrow$};

\draw(21,2)--(22,2);\draw(21,1)--(22,1);
\node [above] at (22,2) {\scriptsize $1$};\node [above] at (20.6,2) {\scriptsize $2<a$};
\node [below] at (22,1) {\scriptsize $n$};\node [below] at (21,1) {\scriptsize $c$};
\node at (21,2) {\scriptsize$\bullet$};\node at (21,1) {\scriptsize$\bullet$};
\node at (22,2) {\scriptsize$\bullet$};\node at (22,1) {\scriptsize$\bullet$};
\end{tikzpicture}$

\smallskip\noindent 
$\begin{tikzpicture}[x=.6cm,y=.6cm]
\node at (15,1) {\rule{0mm}{2mm}};
\node at (19.5,1.5) {$\nearrow$};

\draw(21,2)--(22,2);\draw(21,1)--(22,1);\draw(22,2)--(21,1);
\node [above] at (22,2) {\scriptsize $1$};\node [above] at (20.6,2) {\scriptsize $2<a$};
\node [below] at (22,1) {\scriptsize $n$};\node [below] at (21,1) {\scriptsize $c$};
\node at (21,2) {\scriptsize$\bullet$};\node at (21,1) {\scriptsize$\bullet$};
\node at (22,2) {\scriptsize$\bullet$};\node at (22,1) {\scriptsize$\bullet$};

\node at (23.5,1.5) {$\searrow$};

\draw(25,1)--(26,1);\draw(26,2)--(25,1);\draw[dotted](25,2)--(26,2);
\node [above] at (26,2) {\scriptsize $1$};\node [above] at (25,2) {\scriptsize $2$};
\node [below] at (26,1) {\scriptsize $n$};\node [below] at (25,1) {\scriptsize $c$};
\node at (26,2) {\scriptsize$\bullet$};\node at (25,1) {\scriptsize$\bullet$};
\node at (25,2) {\scriptsize$\circ$};\node at (26,1) {\scriptsize$\bullet$};
\end{tikzpicture}$

\medskip\noindent
What is relevant for us is that all of these paths are non-cycles of length at most 3.
\end{proof}

\begin{proof}[Proof of Proposition~\ref{inicioinductivo}]
This is a strightforward calculation. Recall from Example~\ref{d42} that $D_{4,2}$ is the complex with 2-dimen\-sional facets (i.e., maximal faces) $12|13|23$, $12|14|24$, $13|14|34$ and $23|24|34$, and 3-dimen\-sional facets $12|13|24|34$, $12|14|23|34$ and $13|14|23|24$. Four of these seven facets contain the edge $12$ and, therefore, (together with their faces) lie in $X_{12}$. Thus, $R_{12}$ consists of (some) faces of the facets $13|14|34$, $23|24|34$ and $13|14|23|24$, and such faces have dimensions in between 1 and 3, in view of Remark~\ref{not12}. Explicitly, and by direct inspection (keeping in mind Equation~(\ref{r12}) in Proposition~\ref{summa}), $R_{12}$ consists of the nine simplices indicated in the following table, where rows indicate the facet giving rise to the shown face of $R_{12}$:

\bigskip\smallskip\centerline{\begin{tabular}{|c|c|c|c|}
\hline facet &  $\dim=1$ & $\dim=2$ & $\dim=3$ \\ \hline
 $13|14|34$ &  $13|14$ & $13|14|34$ & \\ \hline
 $23|24|34$ &  $23|24$ & $23|24|34$ &  \\ \hline
 $13|14|23|24$  &  $13|14$, \,$23|24$ & $13|14|23$,\, $13|14|24$,\, $13|23|24$,\, $14|23|24$ & $13|14|23|24$ \\
 \hline
\end{tabular}}

\bigskip\medskip
The acyclicity of $\mathcal{Q}_1^2$ is now evident from its modified Hasse diagram:

\medskip\medskip\noindent 
\raisebox{-2.3mm}{$\begin{tikzpicture}[x=.6cm,y=.6cm]
\node at (-3,0) {\rule{0mm}{4mm}};
\node[above] at (0,0) {$13|14|34$};
\node[above] at (4,0) {$23|24|34$};
\node[above] at (8,0) {$13|14|23$};
\node[above] at (12,0) {$13|14|24$};
\node[above] at (16,0) {$13|23|24$};
\node[above] at (20,0) {$14|23|24$};
\node[below] at (8,-2) {$13|14$};
\node[below] at (12,-2) {$23|24$};
\draw[->,very thick](12,-2)--(16,0);
\draw[->,very thick](8,-2)--(8,0);
\draw[->,very thin](0,0)--(7.6,-2);
\draw[->,very thin](4,0)--(11.6,-2);
\draw[->,very thin](12,0)--(8.4,-2);
\draw[->,very thin](20,0)--(12.4,-2);
\end{tikzpicture}$}
\end{proof}

\section{Proof of the main theorems}\label{graficasinfinitas}
All of the hard work has been accounted for in the previous sections; here we only put the pieces together. 

\begin{proof}[Proof of Theorem~\ref{teoremaprincipal}]
The fact that $D_{n,n-2}$ has the homotopy type of a wedge of 2-dimensional spheres follows from Theorem~\ref{DMTkey}, Corollary~\ref{estesieselbueno} and Proposition~\ref{resacy}. It remains to count the number $N_n$ of $S^2$ wedge summands in the homotopy type of $D_{n,n-2}$. This is computed in terms of the Euler characteristic of $D_{n,n-2}$, namely
$$
N_n+1=\chi(D_{n,n-2})=c_0-c_1+c_2-c_3,
$$
where $c_i$ stands for the number of $i$-dimensional faces in $D_{n,n-2}$. Remark~\ref{partcs} and the characterization of $d$-dimensional simplices of $D_{n,n-2}$ discussed in Section~\ref{structuresection} give 
$$
\mbox{$c_0=\genfrac(){0pt}{1}{n}{2}$, \ $c_1=\genfrac(){0pt}{1}{c_0}{2}$, \ $c_2=\genfrac(){0pt}{1}{n}{3}+12\genfrac(){0pt}{1}{n}{4}$ \ and \ $c_3=3\genfrac(){0pt}{1}{n}{4}$,}
$$
from which the expression for $N_n$ in~(\ref{nsw}) follows after a little arithmetics.
\end{proof}

The proof of Theorem~\ref{teoremaprincipalextendido} is identical, except that the use of Theorem~\ref{DMTkey} has to be replaced by Theorem~\ref{DMTkeyinfinito}. We start by identifying $D^2$ as a graph complex.

\begin{lemma}
$D^2$ is the complex of graphs on an infinite number of vertices, almost all being isolated, and the rest forming a subgraph of either a 3-cycle or a 4-cycle.
\end{lemma}
\begin{proof}
As explained in Remark~\ref{alrestringir}, $D_{n-1,n-3}$ is a subcomplex of $D_{n,n-2}$, namley, we think of (the simplex determined by) a graph $\sigma\in D_{n-1,n-3}$ as (the simplex determined by) the graph $\sigma+(1)\in D_{n,n-2}$, where the added vertex has label $n$. Since $D^2$ is the limit complex of the sequence $D_{3,1}\hookrightarrow D_{4,2}\hookrightarrow\cdots$, the lemma follows from Theorem~\ref{vizing} and Lemma~\ref{combinatorics2}.
\end{proof}

\begin{proof}[Proof of Theorem~\ref{teoremaprincipalextendido}]
Recall from Remark~\ref{alrestringir} that the acyclic matching $\mathcal{D}_{n,n-2}$ in Corollary~\ref{estesieselbueno} restricts (in the sense of item (\emph{iii}) of Proposition~\ref{relativized}) to the matching $\mathcal{D}_{n-1,n-3}$. Consequently the modified Hasse diagram of $\mathcal{D}_{n-1,n-3}$ is contained in that for $\mathcal{D}_{n,n-2}$. The union of all these directed graphs (as $n$ grows) yields the modified Hasse diagram of the union $\mathcal{D}^2=\cup_n\mathcal{D}_{n,n-2}$. Since the directed paths of $\mathcal{D}^2$ are the union of the directed paths on the several $\mathcal{D}_{n,n-2}$, $\mathcal{D}^2$ inherits being acyclic and having no infinite directed simple paths. Likewise, the critical simplices of $\mathcal{D}^2$ are given by the union of the critical simplices of the several $\mathcal{D}_{n,n-2}$. Thus $\mathcal{D}^2$ has a countable number of critical 2-simplexes, together with a single 0-dimensional simplex ---the graph with vertices given by the natural numbers $\{1,2,3,\ldots\}$, and a single edge $12$. Theorem~\ref{DMTkeyinfinito} applies to complete the proof of Theorem~\ref{teoremaprincipalextendido}.
\end{proof}

\section{The case of \texorpdfstring{$D_{n,n-3}$}{dnn3}}\label{52}
\begin{proof}[Proof of Proposition~\ref{d52}]
The argument is by direct computation, as in the proof of Proposition~\ref{inicioinductivo}. We provide a complete roadmap that simplifies the task of verifying details.

Note that $D_{5,2}$ has facets in dimensions 5 and 6; the latter ones are given by Vizing's Theorem~\ref{vizing}, and the former ones are of the form $K_4+(1)$, where $K_4$ stands for a complete graph on four vertices (taken from the set $\{1,2,3,4,5\}$). There are $\genfrac(){0pt}{1}{5}{4}=5$ possibilities for the graph $K_4$, three of which have the edge 12 and, therefore, lie (together with their faces) in $X_{12}$. The other two possibilities for $K_4$ are
\begin{equation}\label{facetas1}
\raisebox{-3.5mm}{
\begin{tikzpicture}[x=.6cm,y=.6cm]
\node[left] at (0,1.5) {\scriptsize$1$};
\draw(2,2)--(2,1);\draw(1,2)--(2,2);\draw(1,1)--(2,2);\draw(1,1)--(1,2);\draw(1,1)--(2,1);\draw(1,2)--(2,1);
\node at (0,1.5) {\scriptsize$\bullet$}; \node at (1,1) {\scriptsize$\bullet$};
\node at (1,2) {\scriptsize$\bullet$};\node at (2,2) {\scriptsize$\bullet$};\node at (2,1) {\scriptsize$\bullet$};
\node at (4.3,1.5) {and};
\node[left] at (7,1.5) {\scriptsize$2$};
\draw(9,2)--(9,1);\draw(8,2)--(9,2);\draw(8,1)--(9,2);\draw(8,1)--(8,2);\draw(8,1)--(9,1);\draw(8,2)--(9,1);
\node at (7,1.5) {\scriptsize$\bullet$}; \node at (8,1) {\scriptsize$\bullet$};
\node at (8,2) {\scriptsize$\bullet$};\node at (9,2) {\scriptsize$\bullet$};\node at (9,1) {\scriptsize$\bullet$};
\node at (9.4,1.1) {,};
\end{tikzpicture}}\end{equation}
some of whose faces belong to $R_{12}$. Explicitly, and by direct inspection  (keeping in mind Equation~(\ref{r12}) in Proposition~\ref{summa}), the simplices in $R_{12}$ that come from these two facets are indicated in Tables~\ref{tabla1} and~\ref{tabla2}.

\medskip
\begin{table}
\centerline{\begin{tabular}{|c|c|c|c|}
\hline $\dim=2$ & $\dim=3$ & $\dim=4$ & $\dim=5$ \\ \hline
$23|24|25$ &  $23|24|25|34$ & $23|24|25|34|35$ & $23|24|25|34|35|45$ \\ 
&  $23|24|25|35$ & $23|24|25|34|45$ & \\ 
& $23|24|25|45$ & $23|24|25|35|45$ & \\ \hline
\end{tabular}}
\caption{Simplices in $R_{12}$ coming from the first facet in~(\ref{facetas1})}
\label{tabla1}
\end{table}

\begin{table}
\centerline{\begin{tabular}{|c|c|c|c|}
\hline $\dim=2$ & $\dim=3$ & $\dim=4$ & $\dim=5$ \\ \hline
$13|14|15$ &  $13|14|15|34$ & $13|14|15|34|35$ & $13|14|15|34|35|45$ \\ 
&  $13|14|15|35$ & $13|14|15|34|45$ & \\ 
& $13|14|15|45$ & $13|14|15|35|45$ & \\ \hline
\end{tabular}}
\caption{Simplices in $R_{12}$ coming from the second facet in~(\ref{facetas1})}
\label{tabla2}
\end{table}

On the other hand, the 6-dimensional facets of $D_{5,2}$ have the form $K'_5$ where, as described in Vizing's Theorem, $K'_5$ is the complement (in the complete graph on vertices $\{1,2,3,4,5\}$) of a graph $K''_5$ of the form
$$\begin{tikzpicture}[x=.6cm,y=.6cm]
\draw(2,2)--(2,1);\draw(1,2)--(2,2);\draw(0,1)--(0,2);
\node at (0,1){\scriptsize$\bullet$}; \node at (0,2) {\scriptsize$\bullet$};
\node at (1,2) {\scriptsize$\bullet$};\node at (2,2) {\scriptsize$\bullet$};\node at (2,1) {\scriptsize$\bullet$};
\end{tikzpicture}$$
There are $3\genfrac(){0pt}{1}{5}{3}=30$ possibilities for $K''_5$, and 21 of them do not contain the edge 12, so that the corresponding facet in $D_{5,2}$ (which contains the edge $12$) does not contribute to $R_{12}$. The nine 6-dimensional facets of $D_{5,2}$ that contribute to $R_{12}$, together with the corresponding simplices in $R_{12}$, come from the cases where $K''_5$ contains the edge $12$ (in view of~(\ref{r12})), and are indicated in Tables~\ref{tabla3}--\ref{tabla11}. We omit writing an element that should appear in Table~$i$, if the element has been listed in Table~$j$ for some $j<i$.

\begin{table}[h!]
\centerline{\begin{tabular}{|c|c|c|c|}
\hline $\dim=3$ & $\dim=4$ & $\dim=5$ & $\dim=6$ \\ \hline
$14|23|24|25$ &  $14|15|23|24|25$ &  $14|15|23|24|25|34$ & $14|15|23|24|25|34|35$ \\ 
$15|23|24|25$ &  $14|23|24|25|34$ & $14|15|23|24|25|35$ & \\ 
&  $14|23|24|25|35$ & $14|23|24|25|34|35$ & \\ 
&  $15|23|24|25|34$ & $15|23|24|25|34|35$ & \\ 
&  $15|23|24|25|35$ & & \\\hline
\end{tabular}}
\caption{Simplices in $R_{12}$ coming from $K''_5\;=\;\;
\stackrel{2}{\bullet}\hspace{-2mm}\raisebox{.9mm}{\rule{6mm}{.2mm}}\hspace{-2mm}
\stackrel{1}{\bullet}\hspace{-2mm}\raisebox{.9mm}{\rule{6mm}{.2mm}}\hspace{-2mm}
\stackrel{3}{\bullet}\hspace{4mm} 
\stackrel{4}{\bullet}\hspace{-2mm}\raisebox{.9mm}{\rule{6mm}{.2mm}}\hspace{-2mm}
\stackrel{5}{\bullet}\hspace{-2mm}$}
\label{tabla3}
\end{table}

\begin{table}[h!]
\centerline{\begin{tabular}{|c|c|c|c|}
\hline $\dim=3$ & $\dim=4$ & $\dim=5$ & $\dim=6$ \\ \hline
$13|23|24|25$ &  $13|15|23|24|25$ & $13|15|23|24|25|34$ & $13|15|23|24|25|34|45$ \\ 
& $13|23|24|25|34$ & $13|15|23|24|25|45$ & \\ 
& $13|23|24|25|45$ & $13|23|24|25|34|45$ & \\ 
& $15|23|24|25|45$ & $15|23|24|25|34|45$ & \\\hline
\end{tabular}}
\caption{Simplices in $R_{12}$ coming from $K''_5\;=\;\;
\stackrel{2}{\bullet}\hspace{-2mm}\raisebox{.9mm}{\rule{6mm}{.2mm}}\hspace{-2mm}
\stackrel{1}{\bullet}\hspace{-2mm}\raisebox{.9mm}{\rule{6mm}{.2mm}}\hspace{-2mm}
\stackrel{4}{\bullet}\hspace{4mm} 
\stackrel{3}{\bullet}\hspace{-2mm}\raisebox{.9mm}{\rule{6mm}{.2mm}}\hspace{-2mm}
\stackrel{5}{\bullet}\hspace{-2mm}$}
\label{tabla4}
\end{table}

\begin{table}
\centerline{\begin{tabular}{|c|c|c|}
\hline $\dim=4$ & $\dim=5$ & $\dim=6$ \\ \hline
$13|14|23|24|25$ & $13|14|23|24|25|35$ & $13|14|23|24|25|35|45$ \\ 
$13|23|24|25|35$ & $13|14|23|24|25|45$ & \\ 
$14|23|24|25|45$ & $13|23|24|25|35|45$ & \\ 
 & $14|23|24|25|35|45$ & \\\hline
\end{tabular}}
\caption{Simplices in $R_{12}$ coming from $K''_5\;=\;\;
\stackrel{2}{\bullet}\hspace{-2mm}\raisebox{.9mm}{\rule{6mm}{.2mm}}\hspace{-2mm}
\stackrel{1}{\bullet}\hspace{-2mm}\raisebox{.9mm}{\rule{6mm}{.2mm}}\hspace{-2mm}
\stackrel{5}{\bullet}\hspace{4mm} 
\stackrel{3}{\bullet}\hspace{-2mm}\raisebox{.9mm}{\rule{6mm}{.2mm}}\hspace{-2mm}
\stackrel{4}{\bullet}\hspace{-2mm}$}
\label{tabla5}
\end{table}

\begin{table}
\centerline{\begin{tabular}{|c|c|c|c|}
\hline $\dim=3$ & $\dim=4$ & $\dim=5$ & $\dim=6$ \\ \hline
$13|14|15|24$ & $13|14|15|24|25$ & $13|14|15|24|25|34$ & $13|14|15|24|25|34|35$ \\ 
$13|14|15|25$ & $13|14|15|24|34$ & $13|14|15|24|25|35$ & \\ 
                      & $13|14|15|24|35$ & $13|14|15|24|34|35$ & \\ 
                      & $13|14|15|25|34$ & $13|14|15|25|34|35$ & \\ 
                      & $13|14|15|25|35$ &                                 & \\\hline
\end{tabular}}
\caption{Simplices in $R_{12}$ coming from $K''_5\;=\;\;
\stackrel{1}{\bullet}\hspace{-2mm}\raisebox{.9mm}{\rule{6mm}{.2mm}}\hspace{-2mm}
\stackrel{2}{\bullet}\hspace{-2mm}\raisebox{.9mm}{\rule{6mm}{.2mm}}\hspace{-2mm}
\stackrel{3}{\bullet}\hspace{4mm} 
\stackrel{4}{\bullet}\hspace{-2mm}\raisebox{.9mm}{\rule{6mm}{.2mm}}\hspace{-2mm}
\stackrel{5}{\bullet}\hspace{-2mm}$}
\label{tabla6}
\end{table}

\begin{table}
\centerline{\begin{tabular}{|c|c|c|c|}
\hline $\dim=3$ & $\dim=4$ & $\dim=5$ & $\dim=6$ \\ \hline
$13|14|15|23$ & $13|14|15|23|25$ & $13|14|15|23|25|34$ & $13|14|15|23|25|34|45$ \\ 
                      & $13|14|15|23|34$ & $13|14|15|23|25|45$ & \\ 
                      & $13|14|15|23|45$ & $13|14|15|23|34|45$ & \\ 
                      & $13|14|15|25|45$ & $13|14|15|25|34|45$ & \\\hline
\end{tabular}}
\caption{Simplices in $R_{12}$ coming from $K''_5\;=\;\;
\stackrel{1}{\bullet}\hspace{-2mm}\raisebox{.9mm}{\rule{6mm}{.2mm}}\hspace{-2mm}
\stackrel{2}{\bullet}\hspace{-2mm}\raisebox{.9mm}{\rule{6mm}{.2mm}}\hspace{-2mm}
\stackrel{4}{\bullet}\hspace{4mm} 
\stackrel{3}{\bullet}\hspace{-2mm}\raisebox{.9mm}{\rule{6mm}{.2mm}}\hspace{-2mm}
\stackrel{5}{\bullet}\hspace{-2mm}$}
\label{tabla7}
\end{table}

\begin{table}
\centerline{\begin{tabular}{|c|c|c|}
\hline $\dim=4$ & $\dim=5$ & $\dim=6$ \\ \hline
 $13|14|15|23|24$ & $13|14|15|23|24|35$ & $13|14|15|23|24|35|45$ \\ 
 $13|14|15|23|35$ & $13|14|15|23|24|45$ & \\ 
 $13|14|15|24|45$ & $13|14|15|23|35|45$ & \\ 
                            & $13|14|15|24|35|45$ & \\\hline
\end{tabular}}
\caption{Simplices in $R_{12}$ coming from $K''_5\;=\;\;
\stackrel{1}{\bullet}\hspace{-2mm}\raisebox{.9mm}{\rule{6mm}{.2mm}}\hspace{-2mm}
\stackrel{2}{\bullet}\hspace{-2mm}\raisebox{.9mm}{\rule{6mm}{.2mm}}\hspace{-2mm}
\stackrel{5}{\bullet}\hspace{4mm} 
\stackrel{3}{\bullet}\hspace{-2mm}\raisebox{.9mm}{\rule{6mm}{.2mm}}\hspace{-2mm}
\stackrel{4}{\bullet}\hspace{-2mm}$}
\label{tabla8}
\end{table}

\begin{table}
\centerline{\begin{tabular}{|c|c|}
\hline $\dim=5$ & $\dim=6$ \\ \hline
$13|14|15|23|24|25$ & $13|14|15|23|24|25|45$ \\ 
$13|14|15|24|25|45$ & \\ 
$14|15|23|24|25|45$ & \\\hline
\end{tabular}}
\caption{Simplices in $R_{12}$ coming from $K''_5\;=\;\;
\stackrel{4}{\bullet}\hspace{-2mm}\raisebox{.9mm}{\rule{6mm}{.2mm}}\hspace{-2mm}
\stackrel{3}{\bullet}\hspace{-2mm}\raisebox{.9mm}{\rule{6mm}{.2mm}}\hspace{-2mm}
\stackrel{5}{\bullet}\hspace{4mm} 
\stackrel{1}{\bullet}\hspace{-2mm}\raisebox{.9mm}{\rule{6mm}{.2mm}}\hspace{-2mm}
\stackrel{2}{\bullet}\hspace{-2mm}$}
\label{tabla9}
\end{table}

\begin{table}
\centerline{\begin{tabular}{|c|c|}
\hline $\dim=5$ & $\dim=6$ \\ \hline
 $13|14|15|23|24|34$ & $13|14|15|23|24|25|34$ \\ 
$13|14|23|24|25|34$ & \\\hline
\end{tabular}}
\caption{Simplices in $R_{12}$ coming from $K''_5\;=\;\;
\stackrel{3}{\bullet}\hspace{-2mm}\raisebox{.9mm}{\rule{6mm}{.2mm}}\hspace{-2mm}
\stackrel{4}{\bullet}\hspace{-2mm}\raisebox{.9mm}{\rule{6mm}{.2mm}}\hspace{-2mm}
\stackrel{5}{\bullet}\hspace{4mm} 
\stackrel{1}{\bullet}\hspace{-2mm}\raisebox{.9mm}{\rule{6mm}{.2mm}}\hspace{-2mm}
\stackrel{2}{\bullet}\hspace{-2mm}$}
\label{tabla10}
\end{table}

\begin{table}
\centerline{\begin{tabular}{|c|c|}
\hline $\dim=5$ & $\dim=6$ \\ \hline
 $13|14|15|23|25|35$ & $13|14|15|23|24|25|35$ \\  
 $13|15|23|24|25|35$ & \\\hline
\end{tabular}}
\caption{Simplices in $R_{12}$ coming from $K''_5\;=\;\;
\stackrel{3}{\bullet}\hspace{-2mm}\raisebox{.9mm}{\rule{6mm}{.2mm}}\hspace{-2mm}
\stackrel{5}{\bullet}\hspace{-2mm}\raisebox{.9mm}{\rule{6mm}{.2mm}}\hspace{-2mm}
\stackrel{4}{\bullet}\hspace{4mm} 
\stackrel{1}{\bullet}\hspace{-2mm}\raisebox{.9mm}{\rule{6mm}{.2mm}}\hspace{-2mm}
\stackrel{2}{\bullet}\hspace{-2mm}$}
\label{tabla11}
\end{table}

\begin{table}[h!]
\centerline{\begin{tabular}{|c|c|c|}
\hline $\dim=4$ & $\dim=5$ & $\dim=6$ \\ \hline
 $a=13|14|15|23|35$ & $e=13|14|15|23|24|35$ & $r=13|14|15|23|24|25|35$ \\  
 $b=13|14|15|24|45$ & $f=13|14|15|23|24|45$ & $s=13|14|15|23|24|25|45$ \\  
 $c=13|23|24|25|35$ & $g=13|14|15|23|25|35$ & $t=13|14|15|23|24|35|45$ \\  
 $d=14|23|24|25|45$ & $h=13|14|15|23|35|45$ & $u=13|14|23|24|25|35|45$ \\
 & $i=13|14|15|24|25|45$ & \\ 
 & $j=13|14|15|24|35|45$ & \\ 
 & $k=13|14|23|24|25|35$ & \\ 
 & $\ell=13|14|23|24|25|45$ & \\ 
 & $m=13|15|23|24|25|35$ & \\ 
 & $n=13|23|24|25|35|45$ & \\ 
 & $p=14|15|23|24|25|45$ & \\ 
 & $q=14|23|24|25|35|45$ & \\\hline
\end{tabular}}
\caption{Simplices in $R_{34}$}
\label{tabla12}
\end{table}

As discussed in Section~\ref{section3}, $X_{12}$ is a subcomplex of $D_{5,2}$ with an acyclic pairing that has a single critical cell (in dimension 0). So we can focus on constructing an acyclic pairing on $R_{12}$, i.e., the family of 86 simplices in Tables~\ref{tabla1}--\ref{tabla11}. The construction differs from the one we used in Section~\ref{section3} (where we defined a pairing $\mathcal{Q}$ whose acyclicity was proved by an inductive argument based on Proposition~\ref{relativized}). This time we apply one further round of an inclusion-exclusion pairing. Let:
\begin{itemize}
\item $\mathcal{P}\hspace{.3mm}'_{\hspace{-.3mm}34}$ be the (acyclic) matching on $D_{n,1}$ given by inclusion-exclusion of the edge $34$;
\item $\mathcal{P}_{34}$ be the restriction of $\mathcal{P}\hspace{.3mm}'_{\hspace{-.3mm}34}$ to $R_{12}$ ($\mathcal{P}_{34}$ is automatically acyclic);
\item $R_{34}:=R_{12}-X_{34}$, where $X_{34}$ is the family of graphs in $R_{12}$ with a $\mathcal{P}_{34}$-matching pair (of course, the matching pair should also lie in $R_{12}$). Table~\ref{tabla12} describes $R_{34}$.
\end{itemize}
The key point is that $X_{12}\cup X_{34}$ is a subcomplex of $D_{5,2}$. This follows from checking that no simplex in $R_{34}$ is a face of some simplex in $X_{34}$. So, a new application of Proposition~\ref{descomposicion} allows us to reduce further the problem, namely, we construct an acyclic matching $\mathcal{R}_{34}$ on $R_{34}$. As indicated in Table~\ref{tabla12}, $R_{34}$ consists of (4,12,4) simplices in dimension (4,5,6). The resulting face poset structure is simple enough (see Figure~\ref{mhd}) to construct, from scratch, the required acyclic matching. There are 16 different pairings in $R_{34}$ working for our purposes. One such instance is $\mathcal{R}_{34}:=\{(a,e),(b,f),(c,k),(d,\ell),(g,r),(i,s),(j,t),(n,u)\},$ which is acyclic and has only four critical simplices all in dimension 5, as can be easily seen from its modified Hasse diagram in Figure~\ref{mhd}. Putting everything together: $\mathcal{P}_{12}\cup\mathcal{P}_{34}\cup\mathcal{R}_{34}$ is an acyclic matching on $D_{5,2}$ with a single critical 0-cell (coming from $\mathcal{P}_{12}$) and four additional critical cells in dimension 5 (coming from $\mathcal{R}_{34}$). The result follows.
\end{proof}

\begin{figure}
$\begin{tikzpicture}[x=1.4cm,y=7cm]
\node at (0,0) {$e$};
\node at (1,0) {$f$};
\node at (2,0) {$g$};
\node at (3,0) {$h$};
\node at (4,0) {$i$};
\node at (5,0) {$j$};
\node at (6,0) {$k$};
\node at (7,0) {$\ell$};
\node at (8,0) {$m$};
\node at (9,0) {$n$};
\node at (10,0) {$p$};
\node at (11,0) {$q$};
\node at (1.5,-.5) {$a$};
\node at (3.5,-.5) {$b$};
\node at (7.5,-.5) {$c$};
\node at (9.5,-.5) {$d$};
\node at (1.5,1) {$r$};
\node at (3,1) {$s$};
\node at (4.5,1) {$t$};
\node at (9.5,1) {$u$};
\draw[very thick](0,-.1)--(1.5,-.42);
\draw[very thick](1,-.1)--(3.5,-.42);
\draw[very thin](2,-.1)--(1.5,-.42);
\draw[very thin](3,-.1)--(1.5,-.42);
\draw[very thin](4,-.1)--(3.5,-.42);
\draw[very thin](5,-.1)--(3.5,-.42);
\draw[very thick](6,-.1)--(7.5,-.42);
\draw[very thick](7,-.1)--(9.5,-.42);
\draw[very thin](8,-.1)--(7.5,-.42);
\draw[very thin](9,-.1)--(7.5,-.42);
\draw[very thin](10,-.1)--(9.5,-.42);
\draw[very thin](11,-.1)--(9.5,-.42);
\draw[very thin](0,.1)--(1.5,.92);
\draw[very thin](0,.1)--(4.5,.92);
\draw[very thin](1,.1)--(3,.92);
\draw[very thin](1,.1)--(4.5,.92);
\draw[very thick](2,.1)--(1.5,.92);
\draw[very thin](3,.1)--(4.5,.92);
\draw[very thick](4,.1)--(3,.92);
\draw[very thick](5,.1)--(4.5,.92);
\draw[very thin](6,.1)--(1.5,.92);
\draw[very thin](6,.1)--(9.5,.92);
\draw[very thin](7,.1)--(3,.92);
\draw[very thin](7,.1)--(9.5,.92);
\draw[very thin](8,.1)--(1.5,.92);
\draw[very thick](9,.1)--(9.5,.92);
\draw[very thin](10,.1)--(3,.92);
\draw[very thin](11,.1)--(9.5,.92);
\end{tikzpicture}$
\caption{Modified Hasse diagram for $\mathcal{R}_{34}$ with thick lines standing for matched pairs}
\label{mhd}
\end{figure}
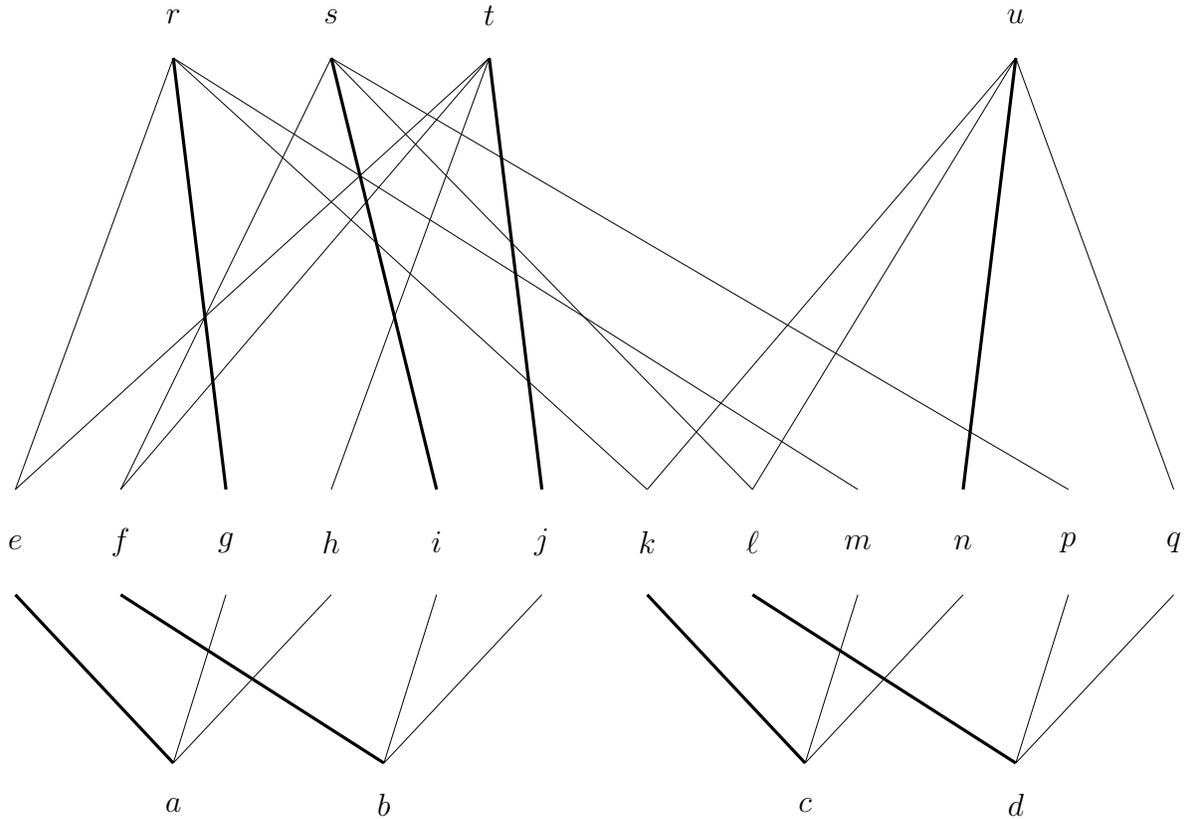


\smallskip\bigskip
{\small \sc Departamento de Matem\'aticas

Centro de Investigaci\'on y de Estudios Avanzados del I.P.N.

Av.~Instituto Polit\'ecnico Nacional n\'umero 2508

San Pedro Zacatenco, M\'exico City 07000, M\'exico

{\tt jesus@math.cinvestav.mx}

{\tt  idskjen@math.cinvestav.mx}}
\end{document}